\newcommand{\qed}{\hfill $\square$\\
\vspace{0.1cm}}
\DeclareMathOperator* \ave {ave}
\newcommand{\hb}{\bar{h}}
\newcommand{\B}{\mathcal{B}}
\newcommand{\C}{\mathcal{C}}
\newcommand{\D}{\mathcal{D}}
\newcommand{\F}{\mathcal{F}}
\newcommand{\G}{\mathcal{G}}
\newcommand{\Ha}{\mathcal{H}}
\newcommand{\J}{\mathcal{J}}
\newcommand{\La}{{\rm La}}
\newcommand{\Pa}{\mathcal{P}}
\newcommand{\sC}{\mathscr{C}}
\newcommand{\V}{\mathcal{V}}
\newcommand{\op}{\mathbf{1}\oplus}
\newcommand{\po}{\oplus\mathbf{1}}
\newcommand{\Remark}{\noindent{\bfseries Remark.} }
\newtheorem{theorem}{Theorem}[section]
\newtheorem{lemma}[theorem]{Lemma}
\newtheorem{proposition}[theorem]{Proposition}
\newtheorem{corollary}[theorem]{Corollary}
\newtheorem{conjecture}{Conjecture}[section]
\newtheorem{question}[theorem]{Question}
\newenvironment{proof}{\noindent{\em Proof.}}{\qed}
\newcommand{\nchn}{\binom{n}{\lfloor \frac{n}{2}\rfloor}}
\newcommand{\lanp}{\La(n,P)}
\newcommand{\comments}[1]{}
\begin{document}
\title{Poset-free Families and Lubell-boundedness}

\author{Jerrold R. Griggs
\thanks{Department of Mathematics, University of South Carolina,
Columbia, SC, USA 29208 (griggs@math.sc.edu).
Research supported in part by a grant from the Simons Foundation (\#282896 to Jerrold Griggs),
by travel funding from National Taiwan University-NCTS/TPE, and by a long-term
visiting position at the IMA, University of Minnesota. } \and
Wei-Tian Li
\thanks{Department of Applied Mathematics, National Chung Hsing University,
Taichung, Taiwan 40227 (weitianli@dragon.nchu.edu.tw).  Supported in part by a
USC Graduate School Dissertation Fellowship, by NSC (102-2115-M-005-001),
and by MOST (103-2115-M-005 -003 -MY2).}}
\date{March 3, 2015\\}

\maketitle

\begin{abstract}

Given a finite poset $P$, we consider the largest size $\lanp$ of
a family $\F$ of subsets of $[n]:=\{1,\ldots,n\}$ that contains no
subposet $P$. This continues the study of the asymptotic growth of
$\lanp$; it has been conjectured that for all $P$, $\pi(P):=
\lim_{n\rightarrow\infty} \lanp/\nchn$ exists and equals a certain
integer, $e(P)$. This is known to be true for paths, and
for several more general families of posets, while for the simple diamond
poset $\D_2$, even the existence of $\pi$ frustratingly remains open.
Here we develop theory to show that $\pi(P)$ exists and equals the
conjectured value $e(P)$ for many new posets $P$.  We introduce a
hierarchy of properties for posets, each of which implies $\pi=e$,
and some implying more precise information about $\lanp$. The
properties relate to the Lubell function of a family $\F$ of
subsets, which is the average number of times a random full chain
meets $\F$.  We present an array of examples and constructions
that possess the properties.

\end{abstract}


\section{Introduction}\label{sec:Intro}

Let the Boolean lattice $\B_n$ denote the partially ordered set
(poset, for short)  $(2^{[n]},\subseteq)$ of all subsets of the
$n$-set $\{1,\ldots,n\}$. For finite posets $P=(P,\leq)$ and
$P'=(P',\leq')$, we say $P$ {\em contains\/} $P'$, if there exists an injection
$f\colon P' \to P$ that preserves the partial ordering.
We also say $P'$ is a {\em (weak) subposet\/} of $P$.
This means that whenever $u\leq' v$ in $P'$, we have $f(u)\leq f(v)$
in $P$~\cite[Chapter 3]{Sta}.

Collections $\F\subseteq 2^{[n]}$ that contain no subposet $P$
are said to be {\em $P$-free}. We are interested
in determining the largest size of a $P$-free family of subsets of
$[n]$, denoted $\lanp$. A foundational result of this sort from 1928,
Sperner's Theorem~\cite{Spe}, solves this problem for antichains,
which are families that contain no two-element chain.
More generally, let the path poset $\Pa_k$ consist
of $k$ totally ordered elements $a_1<\cdots<a_k$, which is
simply a chain of size $k$.
Erd\H{o}s~\cite{Erd} extended Sperner's Theorem
by giving the largest size of a family that contains no
$\Pa_k$.   It is simply the sum of the $k$ middle binomial
coefficients in $n$.
In recent years
Katona~\cite{DebKat,DebKatSwa,GriKat,KatTar} brought the attention of
researchers to the generalization of this problem, which is to
investigate $\lanp$ for many posets $P$.
For only a few posets does $\lanp$ behave as nicely as it
does for chains.
For most posets $P$, it appears to be far more challenging
to determine $\lanp$, and in fact it remains open, even
asymptotically.

Around 2007, when Griggs and Lu~\cite{GriLu} reviewed the
cases $P$ that had been studied, they observed that
while $\lanp$ may not be as simple as the sum of middle binomial
coefficients, it is at least true that $\lanp$ is asymptotic to an
integer multiple of $\nchn$.  This observation is also
implicit in the earlier work of Katona {\it et al}.
For a comprehensive survey of early work on the problem for
various posets $P$, please see~\cite{GriLiLu}.

Next, in 2008, Saks and Winkler pointed out
to Griggs a pattern of the known values of $\pi(P)$.
Griggs and Lu~\cite{GriLiLu} subsequently
formulated it by introducing some notation,
especially the parameter $e(P)$, as follows:
For a set $S$, the collection of all $k$-subsets of $S$ is
conventionally denoted by $\binom{S}{k}$.
In~\cite{GriLiLu},  $\B(n,k)$ is a family of subsets of $[n]$
of the $k$ middle sizes, $\binom{[n]}{\lfloor
(n-k+1)/2 \rfloor}\cup\cdots \cup\binom{[n]}{\lfloor (n+k-1)/2
\rfloor}$ or $\binom{[n]}{\lceil (n-k+1)/2 \rceil}\cup\cdots\cup
\binom{[n]}{\lceil (n+k-1)/2 \rceil}$.  So $\B(n,k)$ is one or two
possible families, depending on the parity of $n+k$.
Also, $\Sigma(n,k)$  is notation for $|\B(n,k)|$.
For a poset $P$, $e(P)$ denotes the maximum $k$ such that
for any integers $n$ and $s$, the family $\F=\binom{[n]}{s}\cup
\cdots\cup\binom{[n]}{s+k-1}$ is $P$-free. In particular, the
union $\B(n,k)$ of $k$ middle levels in $\B_n$ does not contain
$P$ as a subposet.

For instance, the {\em butterfly poset} $\B$ of elements $A_1$,
$A_2$, $B_1$, and $B_2$ with $A_i<B_j$ for $i,j=1,2$ is not
contained in the union of two consecutive levels in the Boolean
lattice, while of course the union of three middle levels does
contain $\B$ for $n\ge 3$. One gets that $e(\B)=2$.
Since the family $\B(n,e)$, where $e=e(P)$, contains no $P$, it is
clear that when $\pi(P)$ exists, it must be at least $e(P)$.

The Griggs-Lu Conjecture is that $e(P)$ is the limiting value.

\begin{conjecture}\label{conj:GriLu}{\rm\cite{GriLiLu}}
For any poset $P$, the limit
$\pi(P):=\lim_{n\rightarrow\infty}\frac{\lanp}{\nchn}$ exists.
Moreover, its value is the integer $e(P)$.
\end{conjecture}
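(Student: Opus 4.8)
\noindent\emph{Proof strategy.} The plan is to establish the two bounds
\[
\liminf_{n\to\infty}\frac{\La(n,P)}{\nchn}\ \ge\ e(P)
\qquad\text{and}\qquad
\limsup_{n\to\infty}\frac{\La(n,P)}{\nchn}\ \le\ e(P),
\]
which together give at once that $\pi(P)$ exists and equals $e(P)$. The lower bound is immediate: by the definition of $e(P)$ the union of the $e(P)$ middle levels of $\B_n$ is $P$-free and has size $(e(P)+o(1))\nchn$. Everything thus reduces to the upper bound $\La(n,P)\le(e(P)+o(1))\nchn$. As a preliminary step I would invoke the crude bound $\La(n,P)=O_P(\nchn)$, valid for every fixed $P$ (see \cite{GriLiLu}); combined with the Gaussian decay of binomial coefficients, this lets one discard a $o(\nchn)$-portion of any $P$-free $\F$ and assume all remaining members have size lying in a window of width $O(\sqrt{n\log n})$ about $n/2$, so it suffices to bound this ``central'' part.

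The basic tool for the central part is the Lubell function $\hb(\F)=\sum_{F\in\F}\binom{n}{|F|}^{-1}$, the expected number of members of $\F$ met by a uniformly random maximal chain. Since $\binom{n}{|F|}\le\nchn$ we have $|\F|\le\hb(\F)\,\nchn$, hence $\La(n,P)/\nchn\le\lambda_n(P)$, where $\lambda_n(P)$ denotes the largest Lubell value of a $P$-free family in $2^{[n]}$. Consequently, for every poset with $\limsup_n\lambda_n(P)=e(P)$ --- exactly the Lubell-bounded posets at the top of the hierarchy introduced here, including paths and the broader families treated below --- one gets $\limsup_n\La(n,P)/\nchn\le e(P)$, hence the conjecture. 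The remaining, essential case is the posets with $\limsup_n\lambda_n(P)>e(P)$.

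Here lies the principal obstacle. The paradigmatic difficult case is the diamond $\D_2$, for which even the existence of $\pi$ is open: one has $e(\D_2)=2$, yet there are $\D_2$-free families of $2^{[n]}$ with Lubell value above $2.2$, so the Lubell method is intrinsically stuck strictly above the conjectured value, and \emph{any} proof for such $P$ must bound $|\F|$ without factoring through $\hb(\F)$. The route I would attempt exploits that $|\F|$ and $\hb(\F)\,\nchn$ differ precisely by the ``spread'' of $\F$: if $|\F|=(e(P)+\epsilon)\nchn$, then either (i) $\F$ is genuinely concentrated, within $o(\sqrt n)$ of the middle, so that $\hb(\F)=|\F|/\nchn+o(1)$ is forced to be large on essentially a single scale; or (ii) an $\Omega(\epsilon)$ fraction of its mass sits at levels $\Theta(\sqrt n)$ from the middle, and one then combines members drawn from well-separated levels into a copy of $P$. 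The crux is case (i): I would aim for a stability/supersaturation theorem --- a $P$-free family of $2^{[s]}$ confined to few central levels whose size is within $\epsilon$ of extremal must consist of $e(P)$ full levels plus a controlled perturbation --- together with a reassembly step showing that when such near-extremal pieces are pasted along all embedded sub-cubes of $\B_n$ they cannot coexist without creating $P$ in $\F$ itself. The enumeration of near-extremal $P$-free families I would attempt via a hypergraph-container or entropy-compression argument on $2^{[s]}$, the reassembly via a global double count. I expect this structural input to require genuinely new ideas: for $\D_2$-type posets no current technique --- including the flag-algebra computations giving the best known upper bounds for $\D_2$, still strictly above $e(\D_2)=2$ --- is known to be strong enough, and a successful attack would in particular settle the long-standing diamond problem.

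Two complementary lines are worth running alongside. To secure the \emph{existence} of $\pi(P)$ independently of its value, one can pass to limit objects: the level-density profiles of extremal $P$-free families converge along subsequences to a density on the rescaled levels, $P$-freeness becomes a closed constraint on these limits, and $\limsup_n\La(n,P)/\nchn$ equals the supremum of the total mass over admissible limits; merely showing this supremum is attained would already localize the conjecture to the equality ``$\sup=e(P)$''. Separately, a Fekete-type subadditivity relating $\La(n+m,P)$ to $\La(n,P)$ and $\La(m,P)$ through products of embedded cubes would yield the existence of $\pi(P)$ directly; no such clean inequality is presently known, and proving one --- or showing it must fail --- would itself clarify why the limit has resisted proof. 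In summary: (i) reduce to the central band; (ii) apply the Lubell method, disposing of all Lubell-bounded posets; and (iii) for the rest, with $\D_2$ as the benchmark, break the Lubell barrier via a near-extremal structure theorem and a reassembly argument --- the step where the real work, and the real risk, lies.
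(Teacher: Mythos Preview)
The statement you are attempting to prove is a \emph{conjecture}, not a theorem: the paper does not contain a proof of it, and indeed explicitly records that even the existence of $\pi(\D_2)$ remains open. So there is no ``paper's own proof'' against which to compare your attempt; the entire paper is devoted to verifying the conjecture for restricted classes of posets precisely because the general statement is unresolved.

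Your proposal is not a proof but an outline of a research program, and you are candid about this. The lower bound and the Lubell-bounded cases are handled correctly and match what the paper does. The genuine gap is your step (iii): for posets with $\limsup_n\lambda_n(P)>e(P)$ you propose a stability/supersaturation theorem for near-extremal $P$-free families on few central levels, followed by a reassembly argument, and you yourself note that for $\D_2$ ``no current technique\ldots is known to be strong enough'' and that success ``would in particular settle the long-standing diamond problem.'' That is exactly right, and it means the crucial step is a conjecture at least as hard as the one you set out to prove. The auxiliary ideas---passing to limit profiles, or a Fekete-type subadditivity for $\La(n,P)$---are likewise speculative: you assert that $P$-freeness becomes a closed constraint on limit densities without justification, and you acknowledge that no suitable subadditivity inequality is known. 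In short, your write-up accurately surveys the landscape and isolates the obstruction, but it does not supply the missing idea; the conjecture remains open after your proposal exactly as before it.
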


Griggs and Lu presented several new families of posets for which
the their conjecture holds~\cite{GriLu}, and they improved the known
bounds on $\pi(P)$ for some families for which the existence of
$\pi(P)$ is still not settled.  One noteworthy discovery is due
to Bukh~\cite{Buk}, who proves the existence of $\pi(P)$ for all
tree posets.
Moreover, for tree posets $\pi(P)$ is the height of $P$
minus one, which is indeed $e(P)$.

Still it remains a daunting problem to
obtain $\pi(P)$, even for certain small posets $P$.
The most-studied case is the {\em diamond poset\/} $\D_2$,
consisting of four elements $A,B,C,D$ with $A<B,C$ and $B,C<D$.
While the conjectured value of $\pi(\D_2)$ is $e(\D_2)=2$, a
series of studies has so far only brought the upper bound down to
$2.25$~\cite{KraMarYou}.
The existence of $\pi(\D_2)$ remains unproven.
It appears that additional tools must be developed.
While many researchers continue to look for improved upper bounds for
the diamond poset, this paper takes a different approach, which is
to develop theoretical tools to greatly expand the list of
posets that satisfy the Griggs-Lu Conjecture.

In their subsequent work with Lu~\cite{GriLiLu}, the authors
learned that for a $P$-free family $\F$, it can be valuable to
consider the average number of times a random full chain of
subsets of $[n]$ intersects $\F$, which they called the Lubell
function of $\F$, denoted $\hb(\F)$.
The present authors~\cite{GriLi} described a
``partition method" for using the Lubell function to derive simple
new proofs of several fundamental poset examples satisfying the
conjecture.

In this paper, we extensively expand the approach of bounding the
Lubell function. We introduce a new series of properties of posets
$P$ for which the conjecture above is satisfied. For posets $P$
satisfying the most restrictive of these properties, named here
 uniform L-boundedness, it was already shown in~\cite{GriLiLu,Li} that
not only does $\lanp$ satisfy the asymptotic conjecture, but it is
exactly determined for general $n$.  Moreover, for such $P$
one can describe all extremal families, just as in the early Erd\H os
result for path posets $\Pa_k$.

The present treatment introduces a series of 
properties, called $m$-L-boundedness, for integers $m\ge0$;
When $m=0$ it is uniform L-boundedness.
As $m$ increases, the properties get successively weaker
(more easily satisfied).
What happens is that families of subsets that contain some of the
comparatively few sets at the top and bottom of the Boolean lattice
may have large Lubell function value, even though such sets
contribute little to the size of the family.
For studying $\lanp$ asymptotically, it makes sense to focus on
$P$-free families of subsets away from the top and bottom
of the Boolean lattice.
We do this with the $m$-L-bounded properties, obtaining many
more posets that satisfy the $\pi(P)$ conjecture.
Uniform L-boundedness is extended in another way, with
properties called lower L-bounded and upper L-bounded,
that also imply the conjecture.

Section~\ref{sec:LUB} reviews the required poset terminology
and concepts related to the Lubell function.  The new properties
and their connections to the conjectures are developed in
Section~\ref{sec:LBP}.

Section~\ref{sec:LAR} introduces a notion connected with
$e(P)$, called a large interval of a poset.  This plays an
important role in the constructions discussed later.

A new class of posets is introduced in Section~\ref{sec:FAN},
called fan posets, which are simply wedges of paths.  These
include the fork posets $\V_r$ previously investigated by Katona {\em
et al.}, for integers $r\ge1$, with elements $A<B_i$ for $1\le i\le
r$. We determine which fans are L-bounded, and give examples for
all $m\ge1$ of a fan that is $m$-L-bounded but not $(m-1)$-L-bounded.
Since all fan posets are trees, and since trees satisfy the
$\pi(P)=e(P)$ conjecture by Bukh's Theorem~\cite{Buk}, it
follows that all fans satisfy the conjecture. We give a simpler
direct proof of this for fans using the Lubell properties.

Constructive methods to generate a surprising variety
of $m$-L-bounded and lower-L-bounded posets from old ones
are described in
Sections~\ref{sec:CONS} and~\ref{sec:BLUP}. All posets generated
in this way satisfy the $\pi=e$ conjecture.  Some of the many
interesting problems for further research are discussed in the
last section.

Some of the problems are based on the numerous thoughtful
questions and suggestions of the anonymous referees, who
also contributed
greatly to improving the paper's presentation.

\section{The Lubell Function and Three Poset Parameters}\label{sec:LUB}

Recall some standard poset notions.  For elements $a\le b$
in poset $P$, a (closed) {\em interval} $[a,b]\subset P$ is the
subposet of $P$ consisting of elements $c$ such that $a\le c\le
b$. Note that if $P$ is the Boolean lattice $\B_n$, then an
interval $[A,B]$ has the same structure as $\B_{|B|-|A|}$.

The {\em dual\/} of a poset $P=(P,\le)$ is the poset
$d(P)=(P,\le_d)$ such that $x\le_d y$ in $d(P)$ if any only if
$y\le x$ in $P$.

An element $x$ of a poset $P$ is $\hat{0}$ (resp., $\hat{1}$), if
for every element $p\in P$, $x\le p$ ($x\ge p$, resp.).

We introduce notation for the {\em filter} or {\em up-set} (resp.,
{\em ideal} or {\em down-set}) generated by an element $p\in P$:
Let $\{p\}^+$ (resp., $\{p\}^-$) denote the sets $\{q\in P\mid
q\ge p\}$ and $\{q\in P\mid q\le p\}$, resp.

The {\em ordinal sum\/} $P_1\oplus P_2$
of disjoint posets $P_1, P_2$ is the
set $P_1\cup P_2$, ordered by $x\le y$
if $x\in P_1$ and $y\in P_2$, or if $x,y$ are in the same $P_i$
with $x\le y$.   We denote by $\mathbf{1}$ the single element poset,
and $k\mathbf{1}$ denotes the $k$-element antichain.

Fix a family $\F\subseteq 2^{[n]}$.  Let $\sC:=\sC_n$ denote the
collection of all $n!$ full (maximal) chains $\emptyset \subset
\{i_1\} \subset \{i_1,i_2\} \subset \cdots \subset [n]$ in the
Boolean lattice $\B_n$. The average
number of times a random chain $\C\in \sC$ meets $\F$ gives an upper
bound on $|\F|$.
The {\em height\/} of $\F$ is
\[h(\F):= \max_{\C\in\sC} |\F\cap \C|.\]
Following~\cite{GriLiLu} we define the {\em Lubell function\/}
of $\F$ by
\[\hb(\F)=\hb_n(\F):=\ave_{\C\in\sC} |\F\cap \C|.\]
The Lubell function bounds the size of a family:

\begin{lemma}\label{lub}{\rm\cite{GriLiLu}}
Let $\F$ be a collection of subsets of $[n]$. Then $\hb(\F)
=\sum_{F\in \F} 1/\binom{n}{|F|}\ge |\F|/\nchn$.
\end{lemma}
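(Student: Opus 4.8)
The plan is to establish the two parts of Lemma~\ref{lub} separately: first the identity $\hb(\F) = \sum_{F \in \F} 1/\binom{n}{|F|}$, and then the inequality $\sum_{F \in \F} 1/\binom{n}{|F|} \ge |\F|/\nchn$.

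For the identity, I would expand the average in the definition of the Lubell function by exchanging the order of summation. By definition, $\hb(\F) = \frac{1}{n!}\sum_{\C \in \sC} |\F \cap \C| = \frac{1}{n!}\sum_{\C \in \sC} \sum_{F \in \F} \mathbf{1}[F \in \C]$, and swapping the two sums gives $\frac{1}{n!}\sum_{F \in \F} |\{\C \in \sC : F \in \C\}|$. The key combinatorial fact is that the number of full chains in $\B_n$ passing through a fixed set $F$ with $|F| = k$ is exactly $k!\,(n-k)!$: a full chain through $F$ is determined by a linear order in which the $k$ elements of $F$ are added (building up from $\emptyset$ to $F$) followed by a linear order in which the remaining $n-k$ elements are added (building up from $F$ to $[n]$). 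Hence $|\{\C : F \in \C\}|/n! = k!(n-k)!/n! = 1/\binom{n}{k}$, which yields the claimed identity.

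For the inequality, I would simply note that for every $F \in \F$ we have $|F| \in \{0,1,\dots,n\}$, so $\binom{n}{|F|} \le \binom{n}{\lfloor n/2 \rfloor} = \nchn$ since the middle binomial coefficient is the largest. Therefore $1/\binom{n}{|F|} \ge 1/\nchn$ for each term, and summing over the $|\F|$ terms gives $\sum_{F \in \F} 1/\binom{n}{|F|} \ge |\F|/\nchn$.

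I do not anticipate a serious obstacle here; this is a foundational lemma. The only point requiring any care is the chain-counting step, namely verifying that exactly $k!(n-k)!$ full chains pass through a given $k$-set, and confirming that the summation interchange is valid (it is, since both sums are finite). The monotonicity of binomial coefficients toward the middle is standard and can be invoked without proof.
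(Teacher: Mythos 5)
Your proof is correct and is exactly the standard chain-counting argument (Lubell's): each $k$-set lies on $k!\,(n-k)!$ of the $n!$ full chains, giving the identity, and the inequality follows since $\binom{n}{k}\le\nchn$ for all $k$. The paper itself only cites this lemma from~\cite{GriLiLu} without reproving it, and your argument is the same one underlying that reference, so there is nothing to add.
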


This lemma is an extension of the heart of Lubell's elegant proof
of Sperner's Theorem~\cite{Lub}. We see that $\hb(\F)$ can be
viewed as a weighted sum, where each set $F$ has weight
$1/\binom{n}{|F|}$. To maximize $|\F|$ over families $\F$ of given
weight, the sets in the family must have weights as small as
possible.

By computing $\hb(\F)$ for all $P$-free $\F$, we obtain an upper
bound on $\lanp$, so also on $\pi(P)$, if it exists. Let
$\lambda_n(P)$ be the maximum value of $\hb(\F)$ over all $P$-free
families $\F\subset 2^{[n]}$. Then, $\lanp/\nchn\le \lambda_n(P)$.
We define $\lambda(P):=\lim_{n\rightarrow\infty} \lambda_n(P)$, if
this limit exists.  Collecting what we have observed, we get that
\[
e(P)\le\pi(P)\le \lambda(P),
\]
if both limits $\pi, \lambda$ exist.

In the following sections we will see many
posets for which $\pi(P)=\lambda(P)$.  On the other hand, there
are posets with $\pi(P)<\lambda(P)$:  The smallest example
is the fork $\V_2$, the poset on three elements $A<B$ and $A<C$.
It is known that $\pi(\V_2)=1$, while easily
$\lambda(\V_2)=2$, since the family $\{\emptyset, [n]\}$ shows
that $\lambda_n(\V_2)=2$ for all $n$.

We continue to believe that $\pi(P)=e(P)$ for general $P$.
While $e(P)$ exists for any poset $P$, we do not know how to
determine it in general. The height $h(P)$ alone
is not sufficient to determine $e(P)$:  Consider the
{\em $k$-diamond} poset $\D_k$, $k\ge2$, which has elements
$\{A,B_1,\ldots,B_k,C\}$ ordered by $A<B_i<C$ for $1\le i\le k$
It can be viewed as the ordinal sum of antichains,
$\mathbf{1}\oplus k\mathbf{1}\oplus\mathbf{1}$. Jiang and Lu~\cite{GriLiLu}
independently observed that, although the diamonds $\D_k$ have height
three, $e(\D_k)$ becomes arbitrarily large as $k$ grows.

\section{Lubell-bounded Posets}\label{sec:LBP}

We next introduce and investigate properties based on the Lubell
function that are useful for obtaining posets $P$ that satisfy the
$\pi(P)=e(P)$ conjecture. With Lu~\cite{GriLiLu}, we considered
posets $P$ for which
\[
\hb_n(\F)\le e(P)
\]
for every $n$ and $P$-free family $\F$ of subsets of $[n]$.
We say such posets are {\em uniformly L-bounded}, with L for Lubell,
since the Lubell functions of $P$-free families are bounded by
$e(P)$, for every $n$.  Since $\B(n,e)$ is
$P$-free for $e=e(P)$ and $n\ge e-1$, it follows that uniformly
L-bounded posets $P$ satisfy $\lambda_n(P)=e(P)$ for all $n\ge
e-1$, and so for such $P$, $\lambda(P)=e(P)$.

For the diamond poset $\D_2$ the existence of the limit
$\pi(\D_2)$ remains elusive.  The diamond $\D_2$ is certainly not
uniformly L-bounded, since there are $\D_2$-free families $\F$ for
which $\hb_n(\F)>2.25$ for large $n$, as compared to $e(\D_2)=2$.
However, for ``most" values $k>2$ the diamond $\D_k$ is uniformly
L-bounded.

Also introduced in~\cite{GriLiLu} is the
{\em harp} poset $\Ha(\ell_1,\ldots,\ell_k)$, consisting of $k$
chains $\Pa_{\ell_1},\ldots,\Pa_{\ell_k}$ with their minimum
elements identified and their maximum elements identified.
Provided the chain sizes are distinct (so satisfy
$\ell_1>\cdots>\ell_k\ge3$), the harp is uniformly
L-bounded.

Here is what we could show about the largest $P$-free
families for uniformly L-bounded $P$.  Note that beyond
determining $\pi(P)$, we know $\lanp$ exactly, and we even know the
extremal $P$-free families.

\begin{theorem}\label{thm:ULB}{\rm\cite{GriLiLu}}
Let $P$ be a poset that is uniformly L-bounded.  Let $e=e(P)$.
Then for all $n$, $\lanp=\Sigma(n,e)$, and so $\pi(P)=e$.
Moreover, if $\F$ is a $P$-free family of subsets of $[n]$ of
maximum size, $\F$ must be $\B(n,e)$.
\end{theorem}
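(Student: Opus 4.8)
The plan is to combine Lemma~\ref{lub} with the defining inequality of uniform L-boundedness, and then to analyze the equality case. First I would take an arbitrary $P$-free family $\F\subseteq 2^{[n]}$. By Lemma~\ref{lub}, $|\F|/\nchn \le \hb_n(\F)$, and since $P$ is uniformly L-bounded, $\hb_n(\F)\le e$; hence $|\F|\le e\nchn = \Sigma(n,e)$ (using that $\B(n,e)$, being the union of $e$ middle levels, has exactly this many sets, at least once $n\ge e-1$ so that the levels are genuinely distinct; for small $n$ one checks the statement directly). Conversely, $\B(n,e)$ is $P$-free by the definition of $e=e(P)$, so it is a $P$-free family attaining size $\Sigma(n,e)$. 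This gives $\lanp=\Sigma(n,e)$, and dividing by $\nchn$ and letting $n\to\infty$ yields $\pi(P)=e$.

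For the characterization of extremal families, suppose $\F$ is $P$-free with $|\F|=\Sigma(n,e)$. Then both inequalities above must be tight. Equality in $\hb_n(\F)\le e$ combined with $|\F|/\nchn\le\hb_n(\F)$ forces $|\F|/\nchn=\hb_n(\F)=\sum_{F\in\F}1/\binom{n}{|F|}$. The inequality $\sum_{F\in\F}1/\binom{n}{|F|}\ge|\F|/\nchn$ in Lemma~\ref{lub} is termwise: each term $1/\binom{n}{|F|}\ge 1/\nchn$, with equality if and only if $\binom{n}{|F|}=\nchn$, i.e. $|F|\in\{\lfloor n/2\rfloor,\lceil n/2\rceil\}$. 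So equality of the sum with $|\F|/\nchn$ forces \emph{every} $F\in\F$ to lie on one of the (one or two) middle levels. If $n$ is odd there are two such levels, together comprising $\Sigma(n,e)$... wait, that is not automatic: I need $e\ge 2$ here, or more carefully I should track how many middle levels $\B(n,e)$ uses. Let me restructure: the set of sizes $|F|$ occurring must be a subset of the middle $e$ sizes (since $\F\subseteq\B(n,e)$ would follow once we know it uses only middle-weight levels and has the right size), and since $|\F|=\Sigma(n,e)=|\B(n,e)|$ while $\F\subseteq$ (union of middle-weight levels) $\subseteq\B(n,e)$, we get $\F=\B(n,e)$.

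The one subtlety to handle carefully is the phrase ``must be $\B(n,e)$'': when $n+e$ is even there is a unique $\B(n,e)$, but when $n+e$ is odd there are two candidate families for $\B(n,e)$ (shifted by one level), and the argument above shows $\F$ must be one of these two. I would state the conclusion accordingly, matching the convention already set for $\B(n,k)$ and $\Sigma(n,k)$ in Section~\ref{sec:Intro}. The main obstacle is thus not the size bound---which is immediate from the two cited facts---but making the equality analysis in Lemma~\ref{lub} precise enough to pin down $\F$ exactly, in particular verifying that a family contained in a union of middle-weight levels and of size $\Sigma(n,e)$ has no room to avoid being a full block of $e$ consecutive middle levels. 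This is a short counting argument: the middle-weight levels of $\B_n$ (one level if $n$ even, two if $n$ odd) already have total size at most $\Sigma(n,e)$ with equality forcing the block structure, since the binomial coefficients are unimodal and the $e$ central ones are exactly those summing to $\Sigma(n,e)$.
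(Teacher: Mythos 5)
The size bound in your first paragraph rests on the identity $e\nchn=\Sigma(n,e)$, and this is false whenever $e\ge2$ and the $e$ middle levels are not all of maximal size (e.g.\ $n=4$, $e=2$: $\Sigma(4,2)=\binom{4}{2}+\binom{4}{1}=10$, while $2\binom{4}{2}=12$). The crude form of Lemma~\ref{lub}, $|\F|/\nchn\le\hb_n(\F)\le e$, therefore only gives $|\F|\le e\nchn$, which is strictly weaker than the claimed $\lanp\le\Sigma(n,e)$. The same confusion derails the equality analysis: you argue that extremality forces termwise equality $1/\binom{n}{|F|}=1/\nchn$, hence that all of $\F$ sits on the one or two middle levels. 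But the genuine extremal family $\B(n,e)$ does not satisfy $\hb_n(\F)=|\F|/\nchn$ when $e\ge2$ and $n$ is even (or $e\ge 3$ in general), since it uses levels of different sizes; so the equality case you analyze is the wrong one, and its conclusion ($\F$ contained in the middle-weight levels) is actually incompatible with $|\F|=\Sigma(n,e)$ for $e\ge 2$, $n$ even --- which is the inconsistency you half-noticed mid-proof but did not resolve.

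The missing idea is to exploit the \emph{weighted} form of the Lubell function rather than the crude comparison with $\nchn$ (this is exactly the remark after Lemma~\ref{lub}: among families of given weight, size is maximized by using sets of smallest weight). Concretely, write $N_k=|\F\cap\binom{[n]}{k}|$; uniform L-boundedness gives $\sum_k N_k/\binom{n}{k}\le e$ together with the trivial constraints $0\le N_k\le\binom{n}{k}$. Maximizing $\sum_k N_k$ under these constraints is a simple exchange/linear-programming argument: a unit of weight placed at level $k$ buys $\binom{n}{k}$ sets, so the optimum places full weight $1$ on the $e$ levels with the largest binomial coefficients and nothing elsewhere, yielding $|\F|\le\Sigma(n,e)$; moreover equality forces $N_k=\binom{n}{k}$ on exactly those $e$ levels (with the two-way ambiguity at the boundary when $n+e$ is odd) and $N_k=0$ otherwise, i.e.\ $\F=\B(n,e)$. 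This is the argument of~\cite{GriLiLu}, to which the paper defers for this theorem; the passage from $\lanp=\Sigma(n,e)$ to $\pi(P)=e$, and the parity caveat about the two versions of $\B(n,e)$, are fine as you state them.
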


A problem with the Lubell function is the large contribution to it
(one each) from the empty set or the full set $[n]$.  For example,
consider the butterfly poset $\B$:  For general $n$ the family
$\F$ consisting of  $\emptyset, [n],$ and the singletons is a
(small) $\B$-free family with $\hb(\F)=3>e(P)=2$, so the butterfly
is not uniformly L-bounded.

For the diamond poset $\D_2$, examples show that $\hb_n(\F)$ is
at least $2.25$ as $n\rightarrow\infty$, which prevents this
approach from proving that the $\pi(\D_2)$ is 2.  However, the
examples with large Lubell function value involve very small sets
(or their complements).  It may be that for families without
those very small or very large sets, the Lubell function does
tend to 2.  That would be good enough for us, since comparatively
very few sets are small or large, not enough sets to affect
$\pi(D_2)$.
We then introduce properties that avoid these sets.

To investigate asymptotically the maximum size $\lanp$ of
$P$-free families, it makes sense to restrict attention first to
families that do not contain $\emptyset, [n]$.  If the Lubell
function of such families stays small, then we still get good
bounds on $\lanp$, at least asymptotically.  Let us say $P$ is
{\em centrally L-bounded}, if for all $n$, $\hb_n(\F)\le e(P)$, for
all $P$-free families $\F$ of proper subsets of $[n]$, that is,
$P$-free families that exclude $\emptyset, [n]$.
The butterfly $\B$ is an example of a centrally L-bounded
poset, as we showed in~\cite{GriLi}.
Here is the analogue of Theorem~\ref{thm:ULB}
for centrally L-bounded posets.

\begin{theorem}\label{thm:CLB}
Let $P$ be a centrally L-bounded poset, and let $e=e(P)$. For
$n\ge e+3$,
\[\lanp=\Sigma(n,e).
\]
Hence, $\pi(P)=e$.
\end{theorem}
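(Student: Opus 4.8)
The plan is to reduce the problem to the centrally L-bounded hypothesis by peeling off the two "bad" sets $\emptyset$ and $[n]$ from an arbitrary $P$-free family. First I would take a $P$-free family $\F \subseteq 2^{[n]}$ of maximum size, so $|\F| = \lanp$, and set $\F' = \F \setminus \{\emptyset, [n]\}$, a $P$-free family of proper subsets of $[n]$. By the centrally L-bounded hypothesis, $\hb_n(\F') \le e$, and then Lemma~\ref{lub} gives $|\F'| \le e \cdot \nchn$. Adding back the at most two removed sets yields $\lanp \le e\cdot\nchn + 2$. On the other hand, $\B(n,e)$ is $P$-free, so $\lanp \ge \Sigma(n,e)$; since $\Sigma(n,e)$ is a sum of $e$ consecutive binomial coefficients near the middle, for $n$ large enough relative to $e$ it already exceeds $e\cdot\nchn + 2$ would be false — rather, one knows $\Sigma(n,e) \le e\cdot\nchn$, so these crude bounds only pin $\lanp$ down to within an additive $O(1)$ and to the right first-order term. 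This already gives $\pi(P) = e$, but not the exact equality $\lanp = \Sigma(n,e)$.

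To get the exact statement I would argue more carefully about which sets can be added back. Suppose $\F$ is a maximum $P$-free family containing, say, $[n]$ (the case of $\emptyset$ is dual, and containing both is handled similarly). I would like to show that removing $[n]$ and any set meeting it in a chain-incompatible way, or more precisely analyzing the local structure at the top, forces a contradiction with maximality unless $\F$ has a very restricted form. The cleaner route: apply the centrally L-bounded bound not to $\F'$ directly but observe that if $\emptyset \in \F$ then every full chain through $\emptyset$ already meets $\F$ once "for free," so the family $\F \setminus \{\emptyset\}$ restricted to the sublattice, suitably reindexed, is still $P$-free and we can iterate; but the genuinely efficient argument is to compare with the known tight result. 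Specifically, I expect the intended proof combines the additive estimate $\lanp \le e\cdot\nchn + 2$ with a separate combinatorial lemma showing that a $P$-free family of size exactly $\Sigma(n,e)$ or larger, for $n \ge e+3$, cannot "waste" weight on $\emptyset$ or $[n]$: if it did, one could delete a minimum-weight set from an over-full middle level and still beat $\Sigma(n,e)$, contradicting that $\Sigma(n,e)$ is the maximum among families avoiding $\emptyset,[n]$ — but we only know that maximum is at most $e\cdot\nchn$, not exactly $\Sigma(n,e)$, so this needs the full strength of how $\hb_n \le e$ forces the extremal family to be exactly $\B(n,e)$, mirroring Theorem~\ref{thm:ULB}.

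Concretely, here is the step I would actually carry out. Let $\F$ be maximum $P$-free, $\F' = \F \cap \binom{[n]}{1,\ldots,n-1}$ (the proper subsets). Then $\hb_n(\F') \le e$. Now I claim $|\F'| \le \Sigma(n,e)$ with equality only for $\F' = \B(n,e)$: this should follow from a weighted-sum optimization exactly as in Lemma~\ref{lub} and Theorem~\ref{thm:ULB}, since $\hb_n(\F') = \sum_{F \in \F'} 1/\binom{n}{|F|} \le e$ means $\F'$ is a family of proper subsets with total weight at most $e$, and the maximum-cardinality such family puts everything on the $e$ lightest available levels, which are the $e$ middle levels $\B(n,e)$ (here $n \ge e+3$ guarantees there is room — the $e$ middle levels lie strictly inside $\binom{[n]}{1},\ldots,\binom{[n]}{n-1}$, and no proper level is lighter than a middle one). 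Hence $\lanp = |\F| \le |\F'| + 2 \le \Sigma(n,e) + 2$. The hard part — and the only real obstacle — is closing this gap of $+2$: I must show that adjoining $\emptyset$ and/or $[n]$ to a $P$-free family of proper subsets either creates a copy of $P$ (ruling it out) or is incompatible with $\F'$ being as large as $\B(n,e)$. I expect this is handled by noting that if $\F' = \B(n,e)$ (the only way to reach size $\Sigma(n,e)$) then since $e = e(P)$ and $\B(n,e)$ already "uses up" the poset's tolerance, adding $\emptyset$ below it or $[n]$ above it produces $\B(n, e+1) \supseteq$ a copy of $P$; and if $|\F'| < \Sigma(n,e)$, then $|\F'| \le \Sigma(n,e) - 1$ by integrality of the relevant estimate, so even $|\F'| + 2 \le \Sigma(n,e) + 1$ — still not quite enough, so one needs the sharper claim that $|\F'| \le \Sigma(n,e) - c$ whenever $\F \ne \B(n,e)$ uses an endpoint, which is where I anticipate the bulk of the casework lies, parallel to the endpoint analysis already implicit in the uniformly L-bounded case.
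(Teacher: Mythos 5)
Your argument establishes only the asymptotic statement $\pi(P)=e(P)$ (which already follows from Proposition~\ref{thm:mLB} with $m=1$); the actual content of the theorem is the exact equality $\lanp=\Sigma(n,e)$ for $n\ge e+3$, and there your sketch has a genuine gap, which you yourself flag as where ``the bulk of the casework lies.'' The missing idea is an exchange argument. In the paper, if a maximum $P$-free family $\F$ contains $\emptyset$, one first shows $\F$ cannot contain all of $\binom{[n]}{1}$: otherwise deleting $\emptyset$ and the singletons leaves a $P$-free family of proper subsets with Lubell weight at most $e-1$, whence $|\F|\le 1+n+\Sigma(n,e-1)<\Sigma(n,e)$ (this is where $n\ge e+3$ is used), contradicting maximality. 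Then $\emptyset$ is replaced by a missing singleton $\{i\}$, and one checks the resulting family is still $P$-free: in any copy of $P$ the set $\{i\}$ would have to be a minimal element (no nonempty proper subset of $\{i\}$ is available), so $\emptyset$ could play the same role in $\F$ itself, a contradiction. Central L-boundedness now applies to the modified family and gives $|\F|\le\Sigma(n,e)$; the case where both $\emptyset$ and $[n]$ lie in $\F$ is handled by the same swaps, with a separate analysis of the exceptional case $n=4$, $e=1$.

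The substitute you propose for this step does not hold up. First, the claim that $|\F'|\le\Sigma(n,e)$ with equality only for $\F'=\B(n,e)$ is false as a pure weight optimization: level weights tie (levels $k$ and $n-k$), so many families of proper subsets have Lubell weight exactly $e$ and size $\Sigma(n,e)$ --- for $n=5$, $e=1$ take nine $2$-subsets and one $3$-subset --- and the paper itself remarks that centrally L-bounded posets can have extremal families other than $\B(n,e)$. More seriously, adjoining $\emptyset$ to $\B(n,e)$ does not produce $\B(n,e+1)$, and it does not follow from $e(P)=e$ alone that $\B(n,e)\cup\{\emptyset\}$ contains $P$: for the three-element poset with two elements below a common maximum one has $e=1$, yet a single middle level together with $\emptyset$ contains no copy. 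For centrally L-bounded $P$ the conclusion you want is true, but proving it is exactly the exchange argument above, so your step is circular at the crucial point. Finally, even granting both of your claims, the scenario $|\F'|=\Sigma(n,e)-1$ with both $\emptyset$ and $[n]$ present would still only give $\lanp\le\Sigma(n,e)+1$, as you concede, so the exact equality is not reached.
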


\begin{proof}
Given a centrally L-bounded poset $P$, suppose that $\F$ is a
$P$-free family of subsets of $[n]$ of maximum size, where $n\ge
e+3$. If $\F$ contains neither $\emptyset$ nor $[n]$, then by
definition, $\hb(\F)\le e$, and easily by Lemma~\ref{lub} we get
$|\F|\le\Sigma(n,e).$

Next consider if $\F$ contains $\emptyset$, but not $[n]$. We know
that $\hb(\F)\le e+1$, since $P$ is centrally L-bounded,
and since $\F$ with  $\emptyset$ removed remains $P$-free.
Then $\F$ cannot
contain all singleton subsets of $[n]$; otherwise, we could form
$\F'=\F\setminus(\{\emptyset\}\cup\binom{[n]}{1})$, and have
$\hb(\F')\le e-1$. But then, $|\F|=1+n+|\F'|\le
1+n+\Sigma(n,e-1)<\Sigma(n,e)$.  This contradicts that
$|\F|=\lanp\ge\Sigma(n,e)$. So some singleton subset $\{i\}$ is
missing from $\F$, and we may obtain a new family $\F''$ from $\F$
by replacing $\emptyset$ by $\{i\}$.

We claim $\F''$ is $P$-free.  Otherwise, $\F''$ contains $P$,
and  $\{i\}$ must be a minimal element of $P$. All other
elements in $P$ are in $\F\setminus\{\emptyset\}$. Then replacing
$\{i\}$ by $\emptyset$, we see that $\F$ itself contains $P$, a
contradiction. By central L-boundedness of $P$, $\hb(\F'')\le e$.
We deduce the desired bound from $|\F|=|\F''|\le \Sigma(n,e)$.

Note that when $[n]\in\mathcal{F}$ and $\emptyset\not\in\mathcal{F}$
the proof is similar.
Finally consider $\F$ containing both $\emptyset$ and $[n]$.
Suppose $n\neq 4$ or $e\neq 1$.  Then if
$\binom{[n]}{1}\subset\F$, we obtain $|\F|\le
2+n+\Sigma(n,e-1)<\Sigma(n,e)$, which contradicts $\F$ having
maximum size. Similarly, $\F$ cannot contain all $(n-1)$-subsets
of $[n]$. We can do replacements as before to obtain a new
$P$-free family that contains neither $\emptyset$ nor $[n]$ and
has size as large as $\F$. Applying the central L-boundedness of
$P$ to this new family, $|\F|=\Sigma(n,e)$.

The last case is $n=4$, $e=1$, and $\F$ contains both $\emptyset$
and $[4]$. If $\F$ contains all singleton subsets of $[4]$, then
$\F=\{\emptyset,[4]\}\cup\binom{[4]}{1}$. Let $\F'=(\F\setminus
\{[4]\})\cup\{ S\}$ for some 3-subset $S$. On the one hand, $\F'$ is
$P$-free since if $S$ represents some element of $P$, then $[4]$
could be the same element of $P$. With other elements of $P$ in
$\F\setminus \{[4]\}$,  we conclude that $\F$ contains $P$, which
contradicts our assumption. On the other hand, $\F'$ must contain
$P$ since $\hb(\F'\setminus\{\emptyset\})=1\frac{1}{4}>e(P)$. The
dilemma is caused by the assumption $\binom{[4]}{1}\subset\F$.
Hence $\F$ cannot contain all singleton subsets. By the same
reasoning, $\F$ cannot contain all 3-subsets of $[4]$. Therefore
we can just replace $\emptyset$ and $[4]$ by a singleton
subset and a 3-subset, resp. Again, we  conclude that
$|\F|=\Sigma(n,e)$.
\end{proof}

This property of a poset being centrally L-bounded generalizes how
we showed in~\cite{GriLi} that when $P$ is the butterfly,
$\lanp=\Sigma(n,e)$ for all sufficiently large~$n$.   Note that,
unlike the more restrictive class of uniformly L-bounded posets,
it need not hold that $\lanp=\Sigma(n,e)$ for {\em all\/} $n$ - it can
fail for small $n$. Indeed, we see this for the butterfly, where
$e=2$: For $n=2$, $\B_2$ is a $\B$-free family of size 4,
which is more than $\Sigma(2,2)=3$.

A further distinction is that extremal families for centrally
L-bounded $P$ are not restricted to the middle level families
$\B(n,e)$, as they are when $P$ is uniformly L-bounded. For
instance, when $P$ is the butterfly, a construction of
DeBonis {\it et al.\/}~\cite{DebKatSwa} for $\B_4$ is
\[
\F=\{\{1\},\{2\},\{1,3,4\},\{2,3,4\}\}\cup\binom{[4]}{2},
\]
 which is a different butterfly-free family of maximum size ($e=2$ and
$|\F|=\Sigma(4,2)$).
For $n\ge 5$, they prove a largest butterfly-free family must be
$\B(n,2)$.

Another instructive example is the J poset $\J$  studied in~\cite{Li}.
It has four elements $A_1<A_2<A_3$ and $A_1<B$.  It is one of the fan
posets introduced in the next section, where we will see that
it is centrally L-bounded.  Again, we have $e=2$, and in $\B_2$ the
family $\F=\{\emptyset,\{1\},\{1,2\}\}$ is a largest $\J$-free
family, of size $\Sigma(2,2)$, that is not $\B(2,2)$.

For general centrally L-bounded posets $P$,
are such exceptions, where there are largest $P$-free families besides
taking the middle levels, only possible for small $n$?  We address this
in the closing section of the paper.

Next we explore further weakening the uniformly L-bounded
property,  by
further limiting the families for which the Lubell
function is required to be bounded.
The idea is that while sets with very few elements--or
dually very many elements--can cause the Lubell
function to be large, the number of such sets is
negligible compared to the size of a largest $P$-free family,
viewed asymptotically as $n$ grows.
It means that to investigate the asymptotics of
$\lanp/\nchn$ as $n$ grows, it suffices to consider
$P$-free families that contain no small or large sets.
The advantage is that by eliminating the small and large
sets from a family, we may be able to lower our upper
 bound on the Lubell function, and, hence, on the size of
 the family.

We could actually exclude all sets that have size
differing by more than $\sim\log{n}\sqrt{n}$ from the
middle size, $n/2$, but no one has yet found a good way
to take proper advantage of such a size restriction for
the most challenging posets $P$, such as the diamond.
However, there are posets $P$ for which even a very mild
restriction on size is sufficient to deduce that
$\pi(P)=e(P)$.

We introduce a family of poset properties, indexed by
integers $m\ge0$, each of which implies $\pi(P)=e(P)$.
For given $m$, we ignore the
subsets in both the bottom $m$ and top $m$ levels of the Boolean
lattice:.  We say $P$ is {\em $m$-L-bounded}, if for all $n$,
$\hb_n(\F)\le e(P)$ for all $P$-free families $\F$ of subsets of
sizes in $[m,n-m]$. Then a poset is 0-L-bounded means it is
uniformly L-bounded, while 1-L-bounded means it is centrally
L-bounded.  As $m$ increases, the increasing restriction on the
families $\F$ that must satisfy the Lubell function condition,
means that more posets potentially have the property.  We develop the
theory for these properties, and verify the asymptotic conjecture
$\pi=e$ for posets that possess them.  Interestingly, in
the section on fan posets, we shall give examples for all $m\ge1$ of
posets that are $m$-L-bounded but not ($m-1$)-L-bounded.

\begin{proposition}\label{thm:mLB}
Let integer $m\ge0$.  Let $P$ be an $m$-L-bounded poset, and let
$e=e(P)$.  Then for all $n$,
\[
\lanp\le \Sigma(n,e)+2\sum_{i=0}^{m-1}\binom{n}{i}.
\]
Hence, $\pi(P)=e$.
\end{proposition}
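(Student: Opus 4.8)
The plan is to mimic the proof of Theorem~\ref{thm:CLB}, but now peeling off up to $m$ levels from the bottom and the top of the Boolean lattice instead of just the empty set and $[n]$. Let $\F$ be a $P$-free family of subsets of $[n]$ of maximum size. Write $\F = \F_{\mathrm{low}} \cup \F_{\mathrm{mid}} \cup \F_{\mathrm{high}}$, where $\F_{\mathrm{low}}$ consists of the members of $\F$ of size $<m$, $\F_{\mathrm{high}}$ those of size $>n-m$, and $\F_{\mathrm{mid}}$ the members of sizes in $[m,n-m]$. Trivially $|\F_{\mathrm{low}}| \le \sum_{i=0}^{m-1}\binom{n}{i}$ and likewise for $\F_{\mathrm{high}}$, which accounts for the additive error term $2\sum_{i=0}^{m-1}\binom{n}{i}$. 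The remaining family $\F_{\mathrm{mid}}$ is $P$-free and lives in sizes $[m,n-m]$, so $m$-L-boundedness gives $\hb_n(\F_{\mathrm{mid}}) \le e$, and Lemma~\ref{lub} yields $|\F_{\mathrm{mid}}| \le \Sigma(n,e)$. Adding the three bounds gives exactly the claimed inequality $\lanp \le \Sigma(n,e) + 2\sum_{i=0}^{m-1}\binom{n}{i}$.

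\medskip

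\noindent **For the asymptotic conclusion** $\pi(P)=e$: on the one hand, the inequality just proved gives
\[
\frac{\lanp}{\nchn} \le \frac{\Sigma(n,e)}{\nchn} + \frac{2\sum_{i=0}^{m-1}\binom{n}{i}}{\nchn}.
\]
As $n\to\infty$, the first term tends to $e$ (a standard fact, since $\Sigma(n,e)$ is a sum of $e$ consecutive middle binomial coefficients, each asymptotic to $\nchn$), and the second term tends to $0$ because $m$ is fixed and $\binom{n}{i}/\nchn \to 0$ for each fixed $i<m$. Hence $\limsup_{n\to\infty}\lanp/\nchn \le e$. On the other hand, the family $\B(n,e)$ is $P$-free and has size $\Sigma(n,e)$, so $\lanp \ge \Sigma(n,e)$ and thus $\liminf_{n\to\infty}\lanp/\nchn \ge e$. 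Therefore the limit $\pi(P)$ exists and equals $e=e(P)$.

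\medskip

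\noindent **The main subtlety** — really the only place where anything needs care — is ensuring that $\F_{\mathrm{mid}}$ is genuinely $P$-free and that $m$-L-boundedness applies to it verbatim: but $\F_{\mathrm{mid}} \subseteq \F$ and any subfamily of a $P$-free family is $P$-free, and by construction every member of $\F_{\mathrm{mid}}$ has size in $[m,n-m]$, so the definition of $m$-L-boundedness applies directly with no replacement arguments needed. (This is in contrast to Theorem~\ref{thm:CLB}, where one sought the exact value $\Sigma(n,e)$ and therefore had to perform delicate replacements to rule out $\emptyset$ and $[n]$; here, since we only want the weaker bound with the additive correction and the asymptotic statement, the crude splitting suffices.) One should also note the mild point that when $2m > n$ the set of sizes $[m,n-m]$ is empty, so $\F_{\mathrm{mid}}=\emptyset$ and the bound holds trivially; this causes no difficulty since we only care about large $n$ for the limit anyway.
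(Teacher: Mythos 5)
Your proposal is correct and follows essentially the same route as the paper's (much terser) proof: strip off the bottom and top $m$ levels, bound their number by $2\sum_{i=0}^{m-1}\binom{n}{i}$, bound the remaining middle part by $\Sigma(n,e)$ via $m$-L-boundedness and the Lubell weight argument, and then compare with the lower bound $\lanp\ge\Sigma(n,e)$ to get $\pi(P)=e$. Your added care about the degenerate case $2m>n$ and the explicit limit computation are fine elaborations of the same argument, not a different method.
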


\begin{proof}
For an $m$-L-bounded poset $P$, the bound on $\lanp$ follows by
removing the tails (bottom and top $m$ levels) of a $P$-free
family $\F$ that achieves $\lanp$, since the family that remains
is uniformly L-bounded.  Asymptotically, $\Sigma(n,e)\sim e\nchn$,
while the number of sets at the tails (the $m$ smallest and
largest sizes) is only $O(n^{m-1})$.
\end{proof}

To capture $m$-L-boundedness for general $m$, we say
$P$ is {\em L-bounded,} if it is $m$-L-bounded for some $m$.
This class is then the union of the classes of $m$-L-bounded
posets over all $m$, and the Proposition above applies.

\begin{corollary}\label{thm:LB}
If poset $P$ is L-bounded, then $\pi(P)=e(P)$.
\end{corollary}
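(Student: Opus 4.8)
The plan is to derive this corollary directly from Proposition~\ref{thm:mLB}, since ``L-bounded'' was \emph{defined} to mean ``$m$-L-bounded for some integer $m\ge 0$.'' So the entire content reduces to checking that the asymptotic conclusion $\pi(P)=e(P)$ genuinely follows from the size bound stated in that proposition, together with the general lower bound $e(P)\le\pi(P)$ recorded in~(\ref{eqn:epilambda}). In other words, the corollary is a one-line consequence, and the ``proof'' is really an observation about how the pieces already assembled fit together.

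Here is how I would carry it out. First, suppose $P$ is L-bounded, and fix $m\ge 0$ with $P$ being $m$-L-bounded; set $e=e(P)$. By Proposition~\ref{thm:mLB}, for all $n$ we have
\[
\La(n,P)\le \Sigma(n,e)+2\sum_{i=0}^{m-1}\binom{n}{i}.
\]
Next I would divide through by $\nchn$ and take $n\to\infty$. The first term contributes $\Sigma(n,e)/\nchn\to e$, which is the standard fact that the sum of the $e$ middle binomial coefficients is asymptotic to $e\nchn$ (each of the $e$ middle levels is asymptotic to $\nchn$ by, e.g., the ratio of consecutive binomial coefficients tending to $1$). The second term contributes $2\sum_{i=0}^{m-1}\binom{n}{i}\big/\nchn$, and since $m$ is a fixed constant while $\binom{n}{i}=O(n^{m-1})=o\!\left(\nchn\right)$ (the central binomial coefficient grows like $2^n/\sqrt{n}$), this ratio tends to $0$. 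Hence $\limsup_{n\to\infty}\La(n,P)/\nchn\le e$. Combined with the lower bound $e(P)\le\pi(P)$ from~(\ref{eqn:epilambda})—which holds because $\B(n,e)$ is a $P$-free family of size $\Sigma(n,e)\sim e\nchn$—we get that the limit $\pi(P)=\lim_{n\to\infty}\La(n,P)/\nchn$ exists and equals $e=e(P)$.

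The main obstacle is essentially that there is no obstacle: the work was all done in Proposition~\ref{thm:mLB}, whose proof in turn rested on Theorem~\ref{thm:ULB} applied to the truncated family after deleting the bottom and top $m$ levels. The only thing one must be a little careful about is that the bound in the proposition holds for \emph{all} $n$, not just large $n$, so the $\limsup$ argument is clean and requires no ad hoc handling of small cases; and one should note that the existence of the limit (not merely a bound on $\limsup$) follows because the matching lower bound $e$ is achieved for every sufficiently large $n$ by $\B(n,e)$. So the proof I would write is simply: ``This is immediate from Proposition~\ref{thm:mLB}: dividing the displayed bound by $\nchn$ and letting $n\to\infty$, the tail term vanishes and $\Sigma(n,e)/\nchn\to e$, so $\pi(P)\le e(P)$; the reverse inequality always holds.''
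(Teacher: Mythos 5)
Your proposal is correct and matches the paper exactly: the paper records this corollary as an immediate consequence of Proposition~\ref{thm:mLB}, since ``L-bounded'' means $m$-L-bounded for some $m$, and the asymptotic comparison $\Sigma(n,e)\sim e\nchn$ with the negligible tail term $2\sum_{i=0}^{m-1}\binom{n}{i}$ is precisely the reasoning already given there. Your additional remark that the lower bound from the $P$-free family $\B(n,e)$ guarantees the limit exists (not just a $\limsup$ bound) is a correct and worthwhile clarification, but not a different approach.
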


It is not surprising that not every poset is L-bounded.
Indeed, consider the three-element poset $\V_2$.
In Section~\ref{sec:LUB}, we proved that
$\lambda_n(\V_2)=\max\hb_n(\F)=2$
over all $\V_2$-free families $\F$ of subsets of $[n]$.
Since this is larger than $e(\V_2)=1$,
$\V_2$ is not uniformly L-bounded.
Is it $m$-L-bounded for some positive integer $m$?
The answer is still no, by the following construction:
Fix any $m\ge 2$, and consider the following family:
\[
\F_n:=\{F: |F|=m, \forall x,y\in F, x\equiv y\pmod 2)\}
\cup \{F: |F|=m+1, \exists x,y\in F, x\not\equiv y\pmod 2)\}.
\]
One can check that its conjugate (set of complements),
$\overline{\F}_n=\{[n]-F\mid F\in\F_n\}$, is $\V_2$-free, and we have
\[
\hb_n(\overline{\F}_n)=\frac{\binom{\lceil n/2\rceil}{m}+\binom{\lfloor n/2\rfloor}{m}}{\binom{n}{m}}+
\left(1-\frac{\binom{\lceil n/2\rceil}{m+1}+\binom{\lfloor n/2\rfloor}{m+1}}{\binom{n}{m+1}}\right)
\sim 1+\left(\frac{1}{2}\right)^m>1=e(\V_2).
\]
This means that for fixed $m$, the maximum Lubell function
value is bounded strictly above $e(P)$ as $n\rightarrow\infty$.

Nevertheless, we have a method to show that $\pi(\V_2)=e(\V_2)$,
by discarding more elements from the Boolean lattice,
as we now describe.
We introduce a
different weakening of uniform L-boundedness.
We say a poset $P$ is {\em lower-L-bounded\/} if, for any
numbers $\beta\in (\frac{1}{2},1)$ and $\varepsilon>0$, there
exists $N:=N(\beta,\varepsilon)$ such that for all $n\ge N$,
\[
\hb(\F)\le e(P)+\varepsilon,
\]
for all $P$-free families of subsets of $[n]$ of sizes less than
$\beta n$. We are interested as well in the dual property:
Say poset $P$ is {\em upper-L-bounded\/} if, for any numbers
$\alpha\in (0,\frac{1}{2})$ and $\varepsilon>0$, there exists
$N:=N(\alpha,\varepsilon)$ such that for all $n\ge N$,
\[
\hb(\F)\le e(P)+\varepsilon,
\]
for all $P$-free families of subsets of $[n]$ of sizes greater than
$\alpha n$.

\begin{proposition}\label{thm:LLB}
Let $P$ be a lower-L-bounded or upper-L-bounded poset.  Then
$\pi(P)=e(P)$.
\end{proposition}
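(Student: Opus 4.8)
The plan is to prove the statement for a lower-L-bounded $P$; the upper-L-bounded case then follows by the symmetric argument (interchange the bottom and top levels of $\B_n$ throughout; equivalently, pass to the dual $d(P)$, for which $e(d(P))=e(P)$). Write $e=e(P)$. Since $\B(n,e)$ is $P$-free and $\Sigma(n,e)\sim e\nchn$, we already have $\liminf_{n\to\infty}\lanp/\nchn\ge e$, so it will suffice to show that $\limsup_{n\to\infty}\lanp/\nchn\le e$. The idea for the upper bound is to take a $P$-free family, discard the (negligibly few) sets in the top levels, and apply lower-L-boundedness together with Lemma~\ref{lub} to what remains.

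Concretely, I would fix $\varepsilon>0$ and fix once and for all some $\beta\in(\frac{1}{2},1)$ (say $\beta=\frac{3}{4}$). First invoke lower-L-boundedness to get $N_1=N(\beta,\varepsilon)$ with the property that $\hb_n(\G)\le e+\varepsilon$ for every $n\ge N_1$ and every $P$-free $\G\subseteq 2^{[n]}$ all of whose members have size less than $\beta n$. Second, since $\beta$ is a fixed constant exceeding $\frac{1}{2}$, a standard tail estimate (e.g.\ a Chernoff bound) gives $\sum_{i\ge\beta n}\binom{n}{i}\le 2^{-cn}2^{n}$ for some $c=c(\beta)>0$, and because $\nchn=\Theta(2^{n}/\sqrt n)$ there is $N_2$ with $\sum_{i\ge\beta n}\binom{n}{i}\le\varepsilon\nchn$ for all $n\ge N_2$. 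Then for any $n\ge\max(N_1,N_2)$ and any $P$-free $\F\subseteq 2^{[n]}$, split $\F=\F_{\mathrm{low}}\cup\F_{\mathrm{high}}$ according to whether $|F|<\beta n$ or $|F|\ge\beta n$. The subfamily $\F_{\mathrm{low}}$ is $P$-free and lies below level $\beta n$, so $\hb_n(\F_{\mathrm{low}})\le e+\varepsilon$ and hence $|\F_{\mathrm{low}}|\le(e+\varepsilon)\nchn$ by Lemma~\ref{lub}; meanwhile $|\F_{\mathrm{high}}|\le\sum_{i\ge\beta n}\binom{n}{i}\le\varepsilon\nchn$. Thus $|\F|\le(e+2\varepsilon)\nchn$.

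Applying the last bound to a largest $P$-free family shows $\limsup_{n\to\infty}\lanp/\nchn\le e+2\varepsilon$; since $\varepsilon>0$ was arbitrary, $\limsup_{n\to\infty}\lanp/\nchn\le e$, and combining this with the lower bound shows the limit $\pi(P)$ exists and equals $e=e(P)$. I do not expect a genuine obstacle here — the argument is little more than bookkeeping between the definition of lower-L-boundedness and Lemma~\ref{lub}. The two places that call for a little care are the order of quantifiers (one must fix $\beta$ before extracting $N(\beta,\varepsilon)$, so that the threshold $N_1$ is uniform over all $P$-free families at scale $n$) and the fact that the discarded top levels must be shown to be $o(\nchn)$ rather than merely $o(2^{n})$ — which is immediate from the exponential tail bound as soon as $\beta$ is a fixed constant above $\frac{1}{2}$.
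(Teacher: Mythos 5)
Your proof is correct and follows essentially the same route as the paper: split a $P$-free family at the threshold $\beta n$, apply lower-L-boundedness together with Lemma~\ref{lub} to the low part, and observe that the high tail is $o\bigl(\nchn\bigr)$ (the paper cites a polynomial $O(1/n^2)\nchn$ tail bound where you use a Chernoff estimate, which changes nothing). Your explicit handling of the $\liminf$/$\limsup$ and of the dual (upper-L-bounded) case matches the paper's intent, so there is nothing to fix.
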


\begin{proof}
Since the properties are dual, it suffices to prove this for any
lower-L-bounded poset $P$.  For each $n$, let $\F_n$ be a largest
$P$-free family of subsets of $[n]$. Fix some
$\beta\in(\frac{1}{2},1)$. Partition $\F_n$ into $\F'_n$ and
$\F''_n$ such that $\F'_n$ has sets of $\F_n$ of sizes at most
$\beta n$ and $\F''_n=\F_n\setminus\F'_n$.
Shannon's Theorem~\cite[page 256]{AloSpe} gives
$\sum_{i=0}^{\alpha n} \binom{n}{i}=O(\frac{1}{n^2})\nchn$, for any
constant $0<\alpha<\frac{1}{2}$.
For any $\varepsilon>0$, for $n\ge N$, we have
\begin{align*}
\frac{|\F_n|}{\nchn}
&\le \sum_{F\in\F'_n}\frac{1}{\binom{n}{|F|}}+\frac{|\F''_n|}{\nchn}\\
&\le \hb_n(\F'_n)+O\left(\frac{1}{n^2}\right)\\
&\le e(P)+\varepsilon +O\left(\frac{1}{n^2}\right).
\end{align*}
Since $\varepsilon$ is arbitrary, this implies $\pi(P)=e(P)$.
\end{proof}

We shall see a class of posets, including $\V_2$, that are
lower-L-bounded, but not L-bounded, in Section~\ref{sec:FAN}.
Constructions of lower-L-bounded posets are presented in
Section~\ref{sec:BLUP}.


\section{Large Intervals}\label{sec:LAR}

We next introduce a poset structure that plays a role
in this theory of L-boundedness by helping us to
estimate the three poset quantities $\pi(P), e(P), \lambda(P)$.
We say an interval $I=[a,b]$ in $P$ is a {\em
large interval\/}, if it is a maximal interval with $e(I)=e(P)$.
Here are several observations, all relating to decomposing a
poset $P$ into two smaller pieces.

\begin{lemma}\label{lem:parameters}
Suppose  $P$ is a poset with element $p$, such that $P=P_1\cup
P_2$, where $P_1\cap P_2=\{p\}$ and $P_2=\{p\}^+$. Then
\begin{description}
\item{(i)} $\lambda_n(P)\le \lambda_n(P_1)+\lambda_n(P_2)$,
\item{(ii)} $\La(n,P)\le \La(n,P_1)+\La(n,P_2)$, and
\item{(iii)} $\pi(P)\le \pi(P_1)+\pi(P_2)$, if they exist.
\item{(iv)} Further, if $p$ is the maximal element of a
large interval $I$ of $P_1$, or if $P_1=\{p\}^-$
then $e(P)= e(P_1)+e(P_2)$.
\end{description}
\end{lemma}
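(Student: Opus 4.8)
The plan is to obtain (i)--(iii) from a single decomposition of an arbitrary $P$-free family, and (iv) from a level-counting argument together with a stacking construction.

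\medskip
\noindent\textbf{Parts (i) and (ii).} Fix a $P$-free family $\F\subseteq 2^{[n]}$. Call $A\in\F$ a \emph{$P_2$-anchor} if the up-closure $\F_{\ge A}:=\{B\in\F:A\subseteq B\}$ contains a copy of $P_2$; since $p$ is the $\hat 0$ of $P_2$, this is the same as $\F$ containing a copy of $P_2$ whose bottom element is $A$. Let $\F_2$ be the set of $P_2$-anchors and $\F_1=\F\setminus\F_2$. Then $\F_1$ is $P_2$-free, since a copy of $P_2$ inside $\F_1$ has its bottom at some $C\in\F_1$, which would make $C$ a $P_2$-anchor. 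Because $\hb(\cdot)=\sum_F\binom{n}{|F|}^{-1}$ and $\F=\F_1\sqcup\F_2$, we get $\hb(\F)=\hb(\F_1)+\hb(\F_2)$ and $|\F|=|\F_1|+|\F_2|$, so everything reduces to the two bounds $\hb(\F_2)\le\lambda_n(P_1)$ and $|\F_2|\le\La(n,P_1)$. Part (iii) then follows from (ii) by dividing by $\nchn$ and letting $n\to\infty$.

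\medskip
\noindent\textbf{The main obstacle.} The clean hope is that $\F_2$ is itself $P_1$-free: a copy of $P_1$ inside $\F_2$ with $p\mapsto A$ ought to glue to a copy of $P_2$ rooted at $A$ to produce $P$ inside $\F$. Two points must be handled, and they are the heart of (i)--(iii): (a) the $P_2$-copy above $A$ can be taken disjoint (except at $A$) from the $P_1$-copy; (b) the relations $q<r$ with $q\in P_1\setminus\{p\}$, $r\in P_2\setminus\{p\}$ must be respected, i.e.\ $f(q)\subseteq f(r)$. Point (b) is automatic when $q\le p$ (then $f(q)\subseteq f(p)\subseteq f(r)$) but not otherwise, and in fact $\F_2$ need not be $P_1$-free verbatim: for $P=\mathbf{2}\oplus\mathbf{1}$ (the dual of $\V_2$, with $p$ a bottom vertex, $P_1=\mathbf{2}$, $P_2=\Pa_2$), the $P$-free family $\{\{1\},\{1,2\},\{3\},\{3,4\}\}\subseteq 2^{[n]}$ has set of non-maximal members $\{\{1\},\{3\}\}$, a $2$-antichain, which contains $P_1$. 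So the real content is to choose the $P_2$-copy above $A$ with its non-root sets large enough to absorb the cross-related $f(q)$'s (these are size-bounded and $n$ is large, so there should be room) -- and if no such per-family argument goes through, to apply the decomposition only to an extremal $\F$ and exploit structure of extremal families. I expect this to be where most of the effort goes; a naive ``partition into a $P_1$-free and a $P_2$-free subfamily'' provably cannot work (the example above does not decompose into a chain and an antichain).

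\medskip
\noindent\textbf{Part (iv).} Write $e_i=e(P_i)$. Under either hypothesis there is an interval $I$ with $I\subseteq\{p\}^-$, top $p$, and $e(I)=e_1$ (take the given large interval; when $P_1=\{p\}^-$ use $I=P_1$, or simply use $P_1\subseteq\{p\}^-$ directly). For $e(P)\ge e_1+e_2$: if $P$ embeds into the $e_1+e_2$ levels $\binom{[n]}{s},\dots,\binom{[n]}{s+e_1+e_2-1}$ via $f$, put $\ell=|f(p)|$. Since $P_2=\{p\}^+$, $f$ embeds $P_2$ into sets of size in $[\ell,s+e_1+e_2-1]$, a window of at most $s+e_1+e_2-\ell$ consecutive levels containing $P_2$; as $e(P_2)=e_2$ this forces $\ell\le s+e_1-1$. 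Since $I\subseteq\{p\}^-$, $f$ embeds $I$ into sets of size in $[s,\ell]$, a window of at most $\ell-s+1$ levels containing $I$; as $e(I)=e_1$ this forces $\ell\ge s+e_1$ -- a contradiction, so every $e_1+e_2$ consecutive levels are $P$-free. For $e(P)\le e_1+e_2$: for large $n$, take a copy of $P_1$ in a window $[t,t+e_1]$ of levels; the same interval inequality pins $|f_1(p)|=t+e_1$, and dually a copy of $P_2$ in the window $[t+e_1,t+e_1+e_2]$ has $|f_2(p)|=t+e_1$. Identify their roots, so $f_1(p)=f_2(p)=:S$ at level $t+e_1$; below that level only the first copy appears and above it only the second, so with $n$ large one can keep them otherwise disjoint and, crucially, choose the $P_2$-copy so that each $f_2(r)$ contains $S$ together with the sets $f_1(q)$ for the finitely many $q<r$. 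The union of the two copies then embeds $P$ into $e_1+e_2+1$ consecutive levels, giving $e(P)=e_1+e_2$. Here the cross-relations of point (b) are again the only delicate thing, and the interval hypothesis is precisely what forces $|f_1(p)|$ and $|f_2(p)|$ to the common junction level.
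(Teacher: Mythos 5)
Your parts (i)--(iii) are not proved: you set up exactly the decomposition the paper uses (your ``anchors'' are the paper's $\F_1$, the sets $S$ with a copy of $P_2$ in $\F\cap[S,[n]]$), you prove the easy half (non-anchors are $P_2$-free), and then you stop at the key claim that the anchor family is $P_1$-free, calling it the main obstacle and even asserting that a partition into a $P_1$-free and a $P_2$-free subfamily ``provably cannot work.'' Your purported counterexample ($P=\mathbf{2}\oplus\mathbf{1}$, $p$ a bottom vertex, $P_1=\mathbf{2}$, $P_2=\Pa_2$) reads the hypothesis purely set-theoretically and admits a relation $q<r$ with $q\in P_1\setminus\{p\}$, $r\in P_2\setminus\{p\}$, $q\not<p$. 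But the hypothesis ``$P=P_1\cup P_2$'' must be read as a union of posets: every relation of $P$ lies in $P_1$ or in $P_2$, so all cross relations factor through $p$. Indeed, under your literal reading the lemma itself is false, not just its proof: for your example, (ii) would say $\La(n,\mathbf{2}\oplus\mathbf{1})\le (n+1)+\nchn$, whereas $\La(n,\mathbf{2}\oplus\mathbf{1})=\La(n,\V_2)$ exceeds $\nchn$ by $\Theta(\nchn/n)$ (Katona--Tarj\'an, De Bonis--Katona). So the cross-relation worry should have sent you back to the hypothesis, not away from the partition. Under the correct reading the missing step is short, and it is what the paper does: if the anchor family contained a copy $f$ of $P_1$ with $f(p)=A$, take $T$ a maximal anchor containing $A$; since $T$ is an anchor, $\F\cap[T,[n]]$ contains a copy $g$ of $P_2$, and every set of $\F$ strictly containing $T$ is a non-anchor, hence distinct from the $f$-image; all required containments between the two copies are of the form $f(q)\subseteq A\subseteq T\subseteq g(r)$ because cross relations pass through $p$, so $\F$ would contain $P$, a contradiction. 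Your suggested repairs (fattening the non-root sets of the $P_2$-copy, or restricting to extremal families) are neither needed nor developed, so as written the proposal leaves (i)--(iii) unestablished; note also that the conclusion of your decomposition needs $\F_2$ to be literally $P_1$-free (or at least to be measured against $\lambda_n(P_1)$, $\La(n,P_1)$), which only the maximal-anchor argument delivers.

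Your part (iv) is essentially the paper's argument and is fine: the lower bound $e(P)\ge e_1+e_2$ via the two window inequalities pinning $|f(p)|$ is exactly the paper's contradiction, and the upper bound by stacking a $P_1$-copy and a $P_2$-copy identified at the image of $p$ on disjoint fresh ground elements matches the paper's construction. The only blemish there is again the cross-relation clause (``choose the $P_2$-copy so that each $f_2(r)$ contains the $f_1(q)$''), which is both unjustified as stated (it can push sets out of the $e_2+1$-level window) and unnecessary once the hypothesis is read correctly, since then $f_1(q)\subseteq f_1(p)=f_2(p)\subseteq f_2(r)$ handles every cross relation automatically.
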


\begin{proof}
Given $P, P_1, P_2$ as in the statement,
suppose $\F$ is a $P$-free family. Let $\F_1:=\{S\in\F\mid
\F\cap[S,[n]]\mbox{ contains }P_2\}$ and
$\F_2:=\F\setminus\F_1$.

We first show that $\F_1$ is $P_1$-free.
For, suppose $\F_1$ contains a subposet $P_1$, and let
$S\in\F_1$ be the set that represents the element $p$
of $P_1$.
Let $T$ be a maximal set in $\F_1$ containing $S$.
By maximality,
$(\F\setminus\{T\})\cap[T,[n]]$ is contained in $\F_2$.
Furthermore, by definition of $\F_1$,
$\F\cap[T,[n]]$ contains $P_2$ as a subposet.
Thus, $\F$ contains $P$, which contradicts our assumption.

Next we observe that $\F_2$ is $P_2$-free.
For if it did contain a subposet $P_2$, the set $S\in\F_2$
corresponding to element $p$ would have to be in $\F_1$,
a contradiction.

Since every family can be partitioned as above, we have
$\hb(\F)\le\hb(\F_1)+\hb(\F_2)$. This implies (i). Furthermore,
$|\F|=|\F_1|+|\F_2|$ implies $\lanp\le\La(n,P_1)+\La(n,P_2)$,
which is (ii). Then, (iii) is a consequence of (ii).

For (iv), let $e(P_1)=e_1$ and $e(P_2)=e_2$.  First we show
$e(P)\ge e_1+e_2$. Suppose for some $n,s$ the family
$\bigcup_{i=0}^{e_1+e_2-1}\binom{[n]}{s+i}$ contains $P$, where
$0\le s\le n-e_1-e_2+1$. Let $S$ be the set in the family
corresponding to element $p\in P$. Its size $|S|$ cannot be less
than $s+e_1$ nor greater than $s+e_1-1$:  Else, it would imply
$e(P_1)<e_1$ or $e(P_2)<e_2$. Hence, the family is $P$-free, and
$e(P)\ge e_1+e_2$.

It remains to show $e(P)\le e_1+e_2$. By the definition of $e_i$,
for $i=1,2$
we can find an integer $n_i$ large enough so that there exists
$P_i$ in the family of  $e_i+1$ consecutive levels
$\bigcup_{j=0}^{e_1}\binom{[n_i]}{s_i+j}$ of $\B_{n_i}$.
Let $n_0=n_1+n_2+e_1+e_2$ and $s_0=s_1+s_2+e_1+e_2$.
Consider the family $\bigcup_{i=0}^{e_1+e_2}\binom{[n_0]}{s_0+i}$.
An interval $[S_1,T_1]$ in $[n_0]$, with $|S_1|=s_1+e_1+e_2$ and
$|T_1|=s_1+e_1+e_2+n_2$, is the same poset as the
Boolean lattice $\B_{n_2}$. Hence, $[S_1,T_1]\cap
(\bigcup_{i=0}^{e_1+e_2}\binom{[n_0]}{s_0})$ contains $P_2$.
Furthermore, the element $p$ in $P_2$ could be chosen as a set of
size at least $s_1+s_2+e_1+e_2$. On the other hand, an interval
$[S_2,T_2]$ with $|S_2|=s_2+e_2$ and $|T_2|=s_2+e_2+n_1$ is the
same as the Boolean lattice $\B_{n_1}$. Thus, $[S_2,T_2]\cap
(\bigcup_{i=0}^{e_1+e_2}\binom{[n_0]}{s_0})$ contains $P_1$, and
all elements in $P_1$ are sets of size at most
$s_1+s_2+e_1+e_2$. By relabelling the elements, if needed,
one sees that the family
$\bigcup_{i=0}^{e_1+e_2}\binom{[n_0]}{s_0+i}$ contains $P$ as a
subposet. That is, for sufficiently large $n$, some
$e_1+e_2+1$ consecutive levels contain $P$, which implies the
desired inequality on $e(P)$.  This proves (iv).
\end{proof}

Note that the second part  of Lemma~\ref{lem:parameters} (iv),
the case that $P_1=\{p\}^-$, has
 been discovered independently by Burcsi and
Nagy~\cite{BurNag}.

Our purpose in studying large intervals is to find when the
parameter $e(P_1\cup P_2)$ has the ``additive property'' of
 Lemma~\ref{lem:parameters} (iv). Unfortunately, not all
posets contain a large interval.  For example, every maximal
interval of the butterfly poset $\B$ is a $\Pa_2$, but
$e(\B)>e(\Pa_2)=1$.  So poset $\B$, which is centrally L-bounded
(hence, L-bounded), has no large interval. Nevertheless, if an
L-bounded poset contains a large interval, then it is unique.

\begin{proposition}\label{lem:unilarge}
Let $P$ be an L-bounded poset. Then $P$ contains at most one large
interval. Furthermore, if an $m$-L-bounded $P$ contains a large
interval $I$, then $I$ is itself $m$-L-bounded.
\end{proposition}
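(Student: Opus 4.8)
The plan is to prove the two assertions of Proposition~\ref{lem:unilarge} separately, both by contradiction, exploiting the definition of $m$-L-boundedness together with the additivity of $e$ from Lemma~\ref{lem:parameters}(iv).

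\textbf{Uniqueness of a large interval.} Suppose $P$ is $m$-L-bounded and contains two distinct large intervals $I=[a,b]$ and $J=[c,d]$, each with $e(I)=e(J)=e(P)=:e$. First I would dispose of the case where the two intervals are comparable-but-distinct in the containment order — say $I\subsetneq J$ — which is immediate: then $I$ is not a \emph{maximal} interval with $e$-value $e$, contradicting that $I$ is large. So $I$ and $J$ are incomparable as intervals, meaning neither contains the other. The idea now is to build a $P$-free family whose Lubell function, restricted to the middle levels, exceeds $e$, contradicting $m$-L-boundedness. For large $n$, place a copy of $\B_{n'}$ (with $n'=n-2m$, so all its sets lie in the size window $[m,n-m]$) and inside it realize two disjoint shifted stacks of $e$ consecutive middle levels, one ``coding'' the interval $I$ and one coding $J$, arranged so that any occurrence of $P$ would force some element of $P$ outside the $e$ levels available to it — exactly the mechanism used in the proof of Lemma~\ref{lem:parameters}(iv). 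The resulting family is $P$-free, uses only sets of sizes in $[m,n-m]$, yet its Lubell function tends to a value strictly larger than $e$ as $n\to\infty$, since it contains (essentially) $e+1$ levels' worth of weight; this contradicts $m$-L-boundedness. I expect the delicate point here to be verifying $P$-freeness of this construction: one must check carefully that a hypothetical embedding of $P$ cannot ``split'' across the two stacks in a way that evades the size obstructions — this is where the assumption that $I$ and $J$ are \emph{incomparable} large intervals, rather than arbitrary intervals, is essential.

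\textbf{$m$-L-boundedness is inherited by a large interval.} Now suppose $P$ is $m$-L-bounded and contains a large interval $I=[a,b]$; we must show $I$ itself is $m$-L-bounded. Since $I$ is an interval of $P$ with $e(I)=e(P)=:e$, it suffices to show that for every $n$ and every $I$-free family $\G$ of subsets of $[n]$ with all sizes in $[m,n-m]$, we have $\hb_n(\G)\le e$. The strategy is to ``inflate'' such a $\G$ into a $P$-free family of subsets of a larger ground set without changing the Lubell function. Concretely: $P$ decomposes around $a$ and $b$ as a down-part $D$ (elements not above $a$, together with $a$), the interval $I$ itself, and an up-part $U$ (elements not below $b$, together with $b$); by iterating Lemma~\ref{lem:parameters} and using that $I$ is large, one gets $e(D)=e(I)-\text{(stuff)}$... more cleanly, one realizes fixed embeddings of the ``below'' piece and the ``above'' piece of $P$ in constantly many extra bottom and top levels, attached below and above a copy of $\B_n$ holding $\G$. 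If $\G$ were a large $I$-free family but the enlarged family contained $P$, then the portion of $P$ landing in the middle copy of $\B_n$ would be an occurrence of $I$ in $\G$ — contradiction — so the enlarged family is $P$-free. Applying $m$-L-boundedness of $P$ to it (its sizes lie in the admissible window for the larger ground set, since we only added finitely many levels top and bottom and $\G$'s sizes were already at least $m$ from the ends) bounds its Lubell function by $e$; and the added fixed sets contribute a negligible (in fact, with care, zero extra or asymptotically zero) amount, forcing $\hb_n(\G)\le e$ in the limit, hence for all $n$ by a standard stability/limit argument. The main obstacle in this part is making the bookkeeping of ground-set sizes and the ``finitely many extra levels'' precise enough that the size window $[m,\cdot]$ is genuinely respected and the extra sets' Lubell weight is controlled — essentially a careful repetition of the counting in Lemma~\ref{lem:parameters}(iv), which is why I would lean on that lemma rather than redo it from scratch.

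In short, both halves reduce to the same engine: encode interval structure as shifted blocks of middle levels (à la Lemma~\ref{lem:parameters}(iv)), use $e$-additivity to count levels, and then contradict — or invoke — the defining inequality $\hb_n\le e(P)$ of $m$-L-boundedness. The genuinely nontrivial step is the $P$-freeness verification of the constructed families; everything else is level-counting and an asymptotic comparison of $\Sigma$-type sums with $\nchn$.
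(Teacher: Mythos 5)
Both halves of your proposal have genuine gaps. For uniqueness, your construction is never actually specified: ``two disjoint shifted stacks of $e$ consecutive middle levels, one coding $I$ and one coding $J$'' is exactly the point where the proof lives, and you concede you have not verified $P$-freeness. Natural readings of it fail — e.g.\ if the two stacks occupy disjoint ranges of levels, an embedding of $P$ can split across them (a chain of length $e+1$, say, can pass through both stacks), so nothing forces an element of $P$ ``outside the $e$ levels available to it''; and a family carrying ``essentially $e+1$ levels' worth of weight'' will typically contain $P$ outright, since by definition of $e(P)$ some $e+1$ consecutive levels do contain $P$. The paper's construction is much leaner and is where the real idea sits: writing $I_1=[a_1,b_1]$, $I_2=[a_2,b_2]$ and assuming w.l.o.g.\ $b_1\neq b_2$ (otherwise pass to the dual), take the $e$ consecutive levels of sizes $m,\dots,m+e-1$ together with one extra set $S$ of size $m+e$. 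If this family contained $P$, the image of $b_1$ would have to be $S$ (the $e$ levels alone cannot contain $I_1$, and whichever element of $I_1$ maps to $S$ must be its top), and likewise the image of $b_2$ would be $S$, contradicting injectivity. This gives Lubell value $e+1/\binom{n}{m+e}>e$ with all sizes in $[m,n-m]$, contradicting $m$-L-boundedness for a single finite $n$ — no asymptotics needed. Your dichotomy ``nested vs.\ incomparable intervals'' is not the relevant one; the relevant case split is whether the two large intervals share their top (then dualize) or not.

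For the inheritance statement your argument is both overcomplicated and insufficient. The paper's proof is two lines: since $I$ is a subposet of $P$, any $I$-free family $\F$ with sizes in $[m,n-m]$ is automatically $P$-free, so $m$-L-boundedness of $P$ gives $\hb_n(\F)\le e(P)=e(I)$ directly, for every $n$. Your inflation scheme misses this and, as sketched, does not work: an embedding of $P$ into the enlarged family need not place the copy of $I$ inside the middle $\B_n$, so $P$-freeness of the enlarged family does not follow from $I$-freeness of $\G$; the attached bottom/top gadgets threaten the size window $[m,\cdot]$ you must respect; and even if all that were repaired, you would only obtain an asymptotic bound $\hb\le e+o(1)$, whereas $m$-L-boundedness demands the exact inequality $\hb_n(\G)\le e(I)$ for \emph{all} $n$ — there is no ``standard stability/limit argument'' that upgrades the former to the latter.
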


\begin{proof}
Let $P$ be an L-bounded poset, say it is $m$-L-bounded. Assume
that $P$ contains two large intervals $I_1=[a_1,b_1]$ and
$I_2=[a_2,b_2]$ such that $e:=e(P)=e(I_1)=e(I_2)$. Without loss of
generality, we assume $b_1\neq b_2$ (or we may instead consider
the dual of $P$). Consider the Boolean lattice $\B_n$ with $n \ge
2m+e$.

Define the family
$\G:=(\bigcup_{i=1}^{e}\binom{[n]}{m+i-1})\cup\{S\}$, where $S$ is
any set of size $m+e$.  We claim $\G$ is $P$-free.
For if $\G$ contains $P$, it contains $I_1$.
Then $S$ must be $b_1$ since the family of
any $e$ consecutive levels does not contain $I_1$.
Similarly, $\G$ contains $I_2$, and $S$ must be $b_2$,
which contradicts  $b_1 \neq b_2 $.
 We conclude that $\G$ does not contain $P$.

The $P$-free family $\G$ satisfies
$\hb(\G)=e+1/\binom{n}{m+e}$ and every set in $\G$ has sizes
in $[m,n-m]$. This violates the assumption of the
$m$-L-boundedness for $P$. Therefore, $P$ cannot contain two large
intervals.

Now suppose that $P$ contains a large interval $I$. Let $\F$ be
any $I$-free family such that every set in $\F$ has size in
$[m,n-m]$. Since $\F$ is $I$-free, it is also $P$-free. By the
$m$-L-boundedness of $P$, we have $\hb(\F)\le e=e(I)$. So $I$
is $m$-L-bounded.
\end{proof}

\Remark The proof above also implies that if $P$ has two large
intervals, then $\lanp$ is strictly greater than $\Sigma(n,e)$ for
all large enough $n$:  One can take $\B(n,e)$ and one extra set
 to form a $P$-free family.

\section{Fans}\label{sec:FAN}

For our theory of Lubell boundedness, it turns out to be very
interesting to study the natural common generalization of the
fork posets $\V_r$
(studied by Katona {\em et al.}) and the poset $\J$.
For $\ell_1\ge\cdots\ge\ell_k\ge 2$, we
say the poset obtained by identifying the minimum
elements of $k$ chains $\Pa_{\ell_1},\ldots,\Pa_{\ell_k}$
is called the {\em fan} $\V(\ell_1,\ldots,\ell_k)$ .  That
is, a fan is simply a wedge of paths.  We then have
$\V_r=\V(2,\ldots,2)$, where there are $r$ 2's, while
the J poset $\J=\V(3,2)$.
Fans are similar to harps introduced in~\cite{GriLiLu}, except
in harps the maximum
elements of the chains are also identified. So a harp can be
viewed as a suspension of paths (see Figure~\ref{fig:FAN}).

\begin{figure}[h]
\begin{center}
\begin{picture}(200,60)
\put(50,0){\circle*{4}}
\put(35,15){\circle*{4}}
\put(20,30){\circle*{4}}
\put(5,45){\circle*{4}}
\put(50,15){\circle*{4}}
\put(50,30){\circle*{4}}
\put(65,15){\circle*{4}}
\put(80,30){\circle*{4}}
\put(50,0){\line(-1,1){45}}
\put(50,0){\line(0,1){30}}
\put(50,0){\line(1,1){30}}
\put(130,15){\circle*{4}}
\put(130,30){\circle*{4}}
\put(150,0){\circle*{4}}
\put(160,15){\circle*{4}}
\put(160,30){\circle*{4}}
\put(150,45){\circle*{4}}
\put(180,22.5){\circle*{4}}
\put(150,0){\line(-4,3){20}}
\put(150,45){\line(-4,-3){20}}
\put(130,15){\line(0,1){15}}
\put(150,0){\line(2,3){10}}
\put(150,45){\line(2,-3){10}}
\put(160,15){\line(0,1){15}}
\put(150,45){\line(5,-4){30}}
\put(150,0){\line(5,4){30}}

\end{picture}
\end{center}
\caption{The fan $\V(4,3,3)$ and the harp $\Ha(4,4,3)$.}\label{fig:FAN}
\end{figure}

In our notation, Bukh's Tree Theorem~\cite{Buk} tells us that
for posets $P$ for which the Hasse diagram is a tree,
$\pi(P)=e(P)=h(P)-1$,  where $h(P)$ is the height (cf.~\cite{GriLiLu}).
It follows that for the fan
$P=\V(\ell_1,\ldots,\ell_k)$, where $\ell_1\ge\cdots\ge \ell_k\ge 2$,
$\pi(P)=e(P)=\ell_1-1$.  We now give a simple direct proof of
$\pi(P)=e(P)$ for fans  by
showing their L-boundedness.  By comparison, the proof of
Bukh's Tree Theorem,
which is more general,
requires more elaborate probabilistic arguments.

\begin{theorem}\label{thm:palm}
Let $P$ be the fan poset $\V(\ell_1,\ldots,\ell_k)$,
with $\ell_1\ge\cdots\ge\ell_k\ge 2$.
The L-boundedness of $P$ can be classified
according to the $\ell_i$'s as follows.

\begin{description}
\item{(i)} If $k=1$ or if $\ell_1-1>\ell_2>\cdots>\ell_k$,
then $P$ is uniformly L-bounded.
\item{(ii)} If $\ell_1>\ell_2>\cdots>\ell_k$,
then $P$ is centrally L-bounded.
\item{(iii)} If $\ell_1>\ell_2$, then $P$ is  L-bounded.
\item{(iv)} If $\ell_1=\ell_2$, then $P$ is not L-bounded,
but it is lower-L-bounded.
\end{description}
\end{theorem}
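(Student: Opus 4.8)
The plan is to organize the argument around a single "chain-tracking" estimate and then specialize the bookkeeping to each of the four regimes. Let $P=\V(\ell_1,\ldots,\ell_k)$ with $\ell_1\ge\cdots\ge\ell_k\ge 2$, and recall that $e(P)=\ell_1-1$, since the $k$ chains all sit inside $\ell_1$ consecutive levels but the longest one, $\Pa_{\ell_1}$, does not fit in $\ell_1-1$ levels. Fix a $P$-free family $\F$ of subsets of $[n]$. For a set $A\in\F$, say $A$ is a \emph{low point} if the longest chain in $\F$ with top $A$ has length $<\ell_1$, i.e. if no copy of $\Pa_{\ell_1}$ in $\F$ ends at $A$. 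The key structural observation is: if $A$ is \emph{not} a low point — so some $\Pa_{\ell_1}$ in $\F$ ends at $A$ — then above $A$ the family $\F$ can contain at most a very limited configuration, because appending a second long-enough branch at the common bottom of that $\Pa_{\ell_1}$ would create a $\V(\ell_1,\ell_2)\subseteq P$. I would make this precise by letting $B$ be the bottom of a fixed longest chain $C_A$ in $\F$ ending at $A$, and observing that $\F\cap[B,[n]]$, \emph{with} the chain $C_A$ deleted, cannot contain a chain of length $\ell_2$ disjoint from $C_A$ in the "fan sense"; iterating over all $k$ branches controls $\F$ near such points.

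First I would treat (i). Evaluate the Lubell function by the standard chain-counting identity $\hb(\F)=\ave_{\C}|\F\cap\C|$ and split a random full chain $\C$ according to the \emph{first} (bottom-most) element of $\C$ lying in $\F$; call it $A_\C$. Walking up $\C$ from $A_\C$, the family $\F$ restricted to $\C$ is a chain in $\F$ of some length; if that length were $\ge\ell_1$ we would be done comparing with $e(P)=\ell_1-1$ \emph{unless} the chain is exactly witnessing the unique "spine", and the distinctness hypotheses $\ell_1-1>\ell_2>\cdots>\ell_k$ are exactly what forbid the extra contributions that would push the average above $\ell_1-1$: branching off the spine at any level forces one of the shorter, strictly-decreasing branch-lengths, and a counting argument (summing geometric-type weights $1/\binom{n}{|F|}$ over the at-most-one branch available at each height) shows the total stays $\le\ell_1-1$ for every $n$. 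The case $k=1$ is just Erd\H{o}s's theorem on $\Pa_{\ell_1}$, for which $\hb(\F)\le\ell_1-1$ is classical. This gives uniform L-boundedness, hence by Theorem \ref{thm:ULB} the exact result.

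For (ii), dropping the separation $\ell_1-1>\ell_2$ down to merely $\ell_1>\ell_2$ costs exactly the contribution of the empty set: the offending $P$-free families with $\hb>\ell_1-1$ are precisely those exploiting $\emptyset$ as the common minimum of many chains (as in the $\V_r$ example in the text). So I would rerun the (i) estimate for families $\F$ of \emph{proper} subsets: now the bottom vertex of any fan in $\F$ has size $\ge 1$, the branch-length $\ell_2<\ell_1$ constraint is still active for copies of $\Pa_{\ell_1}$, and the same weighted sum yields $\hb(\F)\le\ell_1-1$; Theorem \ref{thm:CLB} then finishes it. For (iii), with $\ell_1>\ell_2$ but possibly $\ell_1=\ell_2+1$ etc., the bad configurations now live in the bottom (and, by fan/harp asymmetry, only the bottom) few levels; I would show there is an $m=m(\ell_1,\ldots,\ell_k)$ — essentially $m=\ell_1$ suffices — such that once all sets of size $<m$ are forbidden, the count returns below $\ell_1-1$, giving $m$-L-boundedness and hence Corollary \ref{thm:LB}. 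Finally for (iv), when $\ell_1=\ell_2$ the family $\binom{[n]}{n-\ell_1+2}\cup\cdots$ of two (or more) top levels already has $\hb=2$, scalable in spirit to show $P$ is \emph{not} $m$-L-bounded for any $m$ — mimic the $\V_r$ argument: for any $m$ choose the configuration of $\ell_1$-ish consecutive levels centered high enough, still of sizes in $[m,n-m]$, containing no $\V(\ell_1,\ell_1)$ yet with $\hb$ bounded away from $\ell_1-1$ from above. But lower-L-boundedness holds: restricting to sets of size $<\beta n$ with $\beta<1$, a set of size $<\beta n$ has weight $1/\binom{n}{s}$ and, crucially, for each "height" along a chain only boundedly many branch-sets of that controlled size are available before a $\V(\ell_1,\ell_1)$ appears, so the weighted branch-sums are $O(1/n)\to 0$ and $\hb(\F)\le\ell_1-1+\varepsilon$ for $n\ge N(\beta,\varepsilon)$; then Proposition \ref{thm:LLB} gives $\pi=e$.

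The main obstacle I anticipate is making the "only boundedly many branches at each height" claim quantitative and uniform: I need a lemma saying that if $\F$ is $\V(\ell_1,\ell_2)$-free (which $P$-free implies), then for every set $B$ the family $\F\cap[B,[n]]$ has all but one "tall" branch short — more precisely, at most one element at distance $\ell_2-1$ above $B$ can begin a further ascent, so the number of sets of each size contributing through $B$ is controlled. Turning that local statement into the global weighted inequality $\hb(\F)\le\ell_1-1$ (rather than something like $k(\ell_1-1)$) is the delicate part, and in the lower-L-bounded case (iv) it additionally requires the size restriction $|F|<\beta n$ to convert "at most one extra branch" into a genuinely negligible $O(1/n)$ error. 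I expect this to require a careful induction on $\ell_1$, peeling off the top level of $\F$ and applying the inductive bound to $\V(\ell_1-1,\ell_2',\ldots)$ on the family below.
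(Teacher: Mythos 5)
Your high-level strategy for (ii)--(iv) is essentially the paper's: partition the full chains by the minimal element $A$ of $\F$ on the chain, and in any block whose average exceeds $\ell_1-1$ greedily extract disjoint chains of sizes $\ell_1-1,\ell_2-1,\ldots,\ell_k-1$ above $A$, using the fact that each removal lowers the block average by only $|Z_j|/(n-|A|)$. But you never actually carry out this estimate -- your closing paragraph explicitly defers the ``only boundedly many branches'' lemma and the passage from it to $\hb(\F)\le\ell_1-1$ -- and that estimate is the entire content of the proof. The paper makes it precise in one line: after removing $Z_1,\ldots,Z_{i-1}$ the block average still exceeds $\ell_1-2-\sum_{j<i}|Z_j|/(n-|A|)$, which stays above $\ell_i-2$ because either the $\ell_j$ strictly decrease (case (ii), with $\emptyset,[n]$ excluded so the quotient is controlled), or the size window forces $n-|A|\ge m>\sum_{j<k}(\ell_j-1)$ (case (iii)), or $n-|A|\ge(1-\beta)n$ makes each removal cost at most $(\ell_1-1)/((1-\beta)n)<\varepsilon/k$ (case (iv)). Moreover your quantitative guess for (iii), that ``essentially $m=\ell_1$ suffices,'' is false: Theorem~\ref{thm:Vee32} shows $\V(3,2,\ldots,2)$ with $m+1$ twos is $m$-L-bounded but not $(m-1)$-L-bounded, so the required $m$ must grow with the number of branches, not just with $\ell_1$; the correct threshold is $m>\sum_{i=1}^{k-1}(\ell_i-1)$.

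Two further points where your route would not go through as sketched. For (i), uniform L-boundedness must also handle families containing $\emptyset$ or $[n]$ and blocks with $A$ near the top of the cube, which is exactly where the naive greedy argument breaks; the paper avoids this entirely by observing that $P$ embeds in the harp $\Ha(\ell_1,\ell_2+1,\ldots,\ell_k+1)$, whose chain lengths are distinct precisely because $\ell_1>\ell_2+1$, and then quoting the uniform L-boundedness of such harps from~\cite{GriLiLu}; your ``spine plus geometric weights'' sketch does not substitute for this. For (iv), the failure of L-boundedness does not come from ``$\ell_1$-ish consecutive levels centered high enough with $\hb$ bounded away above $\ell_1-1$'' -- with all sizes confined to $[m,n-m]$ you cannot beat $\ell_1-1$ by a constant, and $\ell_1$ consecutive middle levels actually contain $P$. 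The correct witness (via Proposition~\ref{lem:unilarge}, since $\ell_1=\ell_2$ gives two distinct large intervals) is $e(P)$ consecutive levels inside $[m,n-m]$ together with a single extra set one level higher, which is $P$-free and has Lubell value $e(P)+1/\binom{n}{m+e}>e(P)$; that tiny excess is all one needs, and it is what your construction misses.
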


\begin{proof}
Let $P$ be the fan $\V(\ell_1,\ldots,\ell_k)$ with $\ell_1\ge
\cdots\ge\ell_k\ge 2$. Note that we have $e(P)=h(P)-1= \ell_1-1$.

For (i), if $k=1$, the result follows from Erd\H{o}s's result
on chains of any given length~\cite{Erd}.
If $k\ge 1$, then $P$ is a subposet of
the harp $\Ha(\ell_1,\ell_2+1,\ldots,\ell_k+1)$, which has distinct
lengths, since $\ell_1>\ell_2+1>\cdots>\ell_k+1$. It means any
family $\F$ that is $P$-free avoids this harp.  Applying the
results on harp-free families~\cite{GriLiLu},
\[
 \hb(\F)\le e(\Ha(\ell_1,\ell_2+1,\ldots,\ell_k+1))=\ell_1-1=e(P),
\]
proving $P$ is uniformly L-bounded.

For (ii), consider any $P$-free family $\F$ of subsets of $[n]$
with $\emptyset,[n]\not\in\F$. Following~\cite{GriLiLu} apply the
{\em min partition} on the set of full chains to get blocks $\sC_A$
containing full chains $\C$ with $\min(\F\cap \C)=A$ for distinct $A$'s.
One block contains chains that avoid $\F$.  Suppose there is a block of
chains for some $A$ having $\ave_{\C\in \sC_A}|\F\cap \C|>\ell_1-1$.
Let $\F_1:=(\F\cap[A,[n]])\setminus \{A\}$, and let $Z_1$ be a chain
of size $\ell_1-1$ in $\F_1$. Such a chain exists, because
\[
\ave_{\C\in \sC_A}|\F_1\cap \C|=
\ave_{\C\in \sC_A}|\F\cap \C|-1>(\ell_1-1)-1=\ell_1-2.
\]
For successive values of $i$ from 2 to $k$, let
$\F_i:=\F_{i-1}\setminus Z_{i-1}$, and let $Z_i$ be a
chain of size $\ell_i-1$ in $\F_i$:
We show by induction on $i$ that such $Z_i$ exists.
We already have $Z_1$.  Note that for any set $S$ with
$A\subset S \subset [n]$, the proportion of full chains
in the interval $[A,[n]]$ that $S$ meets is at least
$1/(n-|A|)$.  It follows that for $i>1$,
\[
\ave_{\C\in \sC_A}|\F_i\cap \C|=
\ave_{\C\in \sC_A}|(\F_{i-1}\setminus Z_{i-1})\cap \C|
>\ell_{i-1}-2-\frac{|Z_{i-1}|}{n-|A|}>\ell_i-2.
\]
This suffices to show there exists $Z_i$ of size
$\ell_i-1$ in $\F_i$.
The chains $Z_i$ together with set $A$ form a fan $P$ in $\F$,
which contradicts our assumption that $\F$ is $P$-free.
Hence, no block $\C_A$ has
$\ave_{\C\in\sC_A}|\F\cap \C|>\ell_1-1$. It means that the
overall average $\hb(\F)\le\ell_1-1$.
Hence, $P$ is centrally L-bounded.

For (iii), it suffices to show $P$ is $m$-L-bounded for
 $m= k(\ell_1-1)$.
Let $\F$ be a $P$-free family of subsets of $[n]$ such that each set
in $\F$ has size in $[m,n-m]$. Again, apply the min partition
on $\F$.  Suppose there is a block $\sC_A$ with
$\ave_{\C\in\sC_A}|\F\cap\C|>\ell_1-1$.
It follows that $\F_1=(\F\setminus\{A\})\cap[A,[n]]$ contains
a chain $Z_1$ of size $\ell_1-1$.
Reasoning as in the proof of (ii) above it can be shown that there are
disjoint chains $Z_2,\ldots, Z_k$ in $\F_1\setminus Z_1$, each of size $\ell_2-1$,
because for $1\le j\le k$
\[
\ave_{\C\in \sC_A}|(\F_1\setminus (Z_1\cup Z_2\cdots\cup Z_j))\cap \C|>\ell_1-2-\frac{j(\ell_1-1)}{n-|A|}\ge\ell_2-2.
\]
Thus, $\F$ contains the fan $\V(\ell_1,\ell_2,\ldots,\ell_2)$,
where there are $k-1$ $\ell_2$'s, which in turn contains
$P=\V(\ell_1,\ell_2,\ldots\ell_k)$.  This contradicts that $\F$ is $P$-free.
We conclude that $\ave_{\C\in\sC_A}|\F\cap\C|\le \ell_1-1$, and
so $\hb(\F)\le \ell_1-1$. Hence $P$ is $m$-L-bounded.

Lastly, for (iv), we assume $\ell_1=\ell_2$. Since $e(P)=\ell_1-1$, the
 chains of size $\ell_1$ are  large intervals of $P$, so by
Proposition~\ref{lem:unilarge}, $P$ is not L-bounded. Suppose it
is not lower-L-bounded either. Then for some $\beta$ and
$\varepsilon$ with $\frac{1}{2}<\beta<1$ and $\varepsilon>0$, for
infinitely many $n$ there is a $P$-free family $\F$ of subsets of
$[n]$ with $\hb(\F)>e(P)+\varepsilon$, where every set in $\F$ has
size less than $\beta n$. Apply the min partition on $\F$ and let
$\sC_A$ be a block with
$\ave_{\C\in\sC_A}|\F\cap\C|>e(P)+\varepsilon$. As before, we
claim $\mathcal{F}\cap [A,[n]]$ contains $P$. By the size condition, removing a
chain of size $\ell_1-1$ from $(\F\setminus\{A\})\cap[A,[n]]$
reduces $\ave_{\C\in\sC_A}|\F\cap\C|$ by at most
$\frac{\ell_1-1}{(1-\beta)n}$. When
$n>\frac{k(\ell_1-1)}{(1-\beta)\varepsilon}$, we can find $k$
disjoint chains of size $\ell_1-1$ in
$(\F\setminus\{A\})\cap[A,[n]]$. Thus, $\F$ contains $P$, a
contradiction. Therefore, $P$ is lower-L-bounded.
\end{proof}


We mention that all fan posets $\V(\ell_1,\ldots,\ell_k)$ are
lower-L-bounded. It is also clear that a uniformly L-bounded poset is
a lower-L-bounded poset. However, there are $m$-L-bounded posets
that are not lower-L-bounded, since the size condition for
lower-L-boundedness does not exclude the small-sized subsets in the family.
We will provide such examples in next section.


Since L-boundedness and lower-L-boundedness each
imply $\pi=e$, we have completed the proof that
$\pi=e$ for fans:

\begin{corollary}
For any fan poset $P=\V(\ell_1,\ldots,\ell_k)$ with
$\ell_1\ge\cdots\ge \ell_k\ge 2$, we have $\pi(P)=e(P)=\ell_1-1$.
\end{corollary}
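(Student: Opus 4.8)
The plan is to obtain the corollary immediately from Theorem~\ref{thm:palm}, Corollary~\ref{thm:LB}, and Proposition~\ref{thm:LLB}; all of the substantive work has already been carried out in the proof of Theorem~\ref{thm:palm}, so what remains is bookkeeping.

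First I would record the value $e(\V(\ell_1,\ldots,\ell_k))=\ell_1-1$, which is the ``in particular'' observation at the start of the proof of Theorem~\ref{thm:palm} and which follows from two easy facts. On the one hand, the union of any $\ell_1-1$ consecutive levels of $\B_n$ contains no chain on $\ell_1$ elements, hence no copy of $\Pa_{\ell_1}$, and therefore no copy of the fan (which contains $\Pa_{\ell_1}$ as a subposet); so $e\ge \ell_1-1$. On the other hand, for $n$ large and $s\approx n/2$ one can embed the fan into $\binom{[n]}{s}\cup\cdots\cup\binom{[n]}{s+\ell_1-1}$ by taking a set of size $s$ as the common minimum and extending it upward along $k$ chains that use pairwise disjoint ground-set elements beyond the minimum, which is possible once $n-s\ge\sum_{i=1}^k(\ell_i-1)$; so $e\le \ell_1-1$, and equality follows. (Equivalently this is the instance $e=h-1$ of the tree formula, the height of the fan being $\ell_1$.)

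Next I would split into the exhaustive cases of Theorem~\ref{thm:palm}. If $k=1$, then $P=\Pa_{\ell_1}$ is uniformly L-bounded by part~(i), hence L-bounded. If $k\ge2$ and $\ell_1>\ell_2$, then $P$ is L-bounded by part~(iii). In either of these cases $\pi(P)=e(P)$ by Corollary~\ref{thm:LB}. If $k\ge2$ and $\ell_1=\ell_2$, then $P$ is lower-L-bounded by part~(iv), so $\pi(P)=e(P)$ by Proposition~\ref{thm:LLB}. Since every fan with $\ell_1\ge\cdots\ge\ell_k\ge2$ falls into exactly one of the situations ``$k=1$'', ``$k\ge2$ and $\ell_1>\ell_2$'', or ``$k\ge2$ and $\ell_1=\ell_2$'', these three subcases exhaust all fans, and together with $e(P)=\ell_1-1$ this gives $\pi=e=\ell_1-1$.

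There is essentially no obstacle here: the corollary is a direct consequence of Theorem~\ref{thm:palm}. The only points needing (minor) care are confirming that $e(P)=\ell_1-1$ and checking that the listed cases of Theorem~\ref{thm:palm}, two of which are subsumed by case~(iii) as far as the conclusion $\pi=e$ is concerned, really do cover every fan — which they do.
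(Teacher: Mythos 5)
Your proposal is correct and follows the same route as the paper: the corollary is deduced directly from the case analysis of Theorem~\ref{thm:palm} together with Corollary~\ref{thm:LB} and Proposition~\ref{thm:LLB}, with $e(P)=\ell_1-1$ noted at the outset (the paper gets this from the tree formula $e=h-1$, while you verify it by a short direct embedding argument, which is a harmless elaboration). Nothing is missing; the three subcases you list do exhaust all fans.
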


Although we defined $m$-L-boundedness, so far we have not
presented any poset that is $m$-L-bounded but not $(m-1)$-L-bounded,
except when $m=1$. In the following, we offer examples of posets
to show that the $m$-L-boundedness property is not vacuous.

\begin{theorem}\label{thm:Vee32}
For $m\ge 1$, the poset $P=\V(3,2,\ldots,2)$, where there are
$m+1$ $2$'s, is $m$-L-bounded but not $(m-1)$-L-bounded.
\end{theorem}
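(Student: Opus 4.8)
The plan is to prove the two assertions separately; the genuinely sharp upper bound ($m$-L-boundedness, rather than merely $(m+3)$-L-boundedness, which is all that Theorem~\ref{thm:palm}(iii) yields here) is the delicate half. Throughout recall $e(P)=h(P)-1=2$, and note the following structural fact: the only element of $P$ with more than one element above it is the minimum, and the subposet of $P$ lying strictly above that minimum is a $2$-chain together with $m+1$ incomparable further elements, a poset on $m+3$ points. Consequently a family $\G$ of subsets of $[n]$ contains $P$ if and only if some $S\in\G$ has at least $m+3$ members of $\G$ strictly above it which do not form an antichain; equivalently, $\G$ is $P$-free as soon as for every $S\in\G$ the members of $\G$ strictly above $S$ either form an antichain or number at most $m+2$.

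For the negative direction I would simply exhibit a bad family. For every $n\ge 2m$ put
\[
\F=\binom{[n]}{n-m-1}\cup\binom{[n]}{n-m}\cup\{Z\},\qquad |Z|=n-m+1.
\]
The sizes occurring, namely $n-m-1,n-m,n-m+1$, all lie in $[m-1,n-m+1]$ when $n\ge 2m$. By the structural fact $\F$ is $P$-free: an $(n-m-1)$-set has exactly $m+1$ supersets of size $n-m$ in $\F$ and at most one ($Z$) of size $n-m+1$, hence at most $m+2$ members of $\F$ above it; an $(n-m)$-set has at most $Z$ above it; and $Z$ has nothing above it. By Lemma~\ref{lub}, $\hb_n(\F)=1+1+1/\binom{n}{n-m+1}=2+1/\binom{n}{m-1}>2=e(P)$, so $P$ is not $(m-1)$-L-bounded.

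For the positive direction I would apply the min partition of the $n!$ full chains of $\B_n$ according to $A:=\min(\F\cap\C)$, as in the proof of Theorem~\ref{thm:palm}; here $\F$ is an arbitrary $P$-free family with all sizes in $[m,n-m]$ (so $n\ge 2m$, else $\F$ is empty). Writing $\F_1$ for the set of members of $\F$ strictly containing $A$, regarded after deletion of the coordinates of $A$ as a family in $\B_N$ with $N:=n-|A|\ge m$, every $\C\in\sC_A$ satisfies $|\F\cap\C|=1+|\F_1\cap\C|$, and the portion of $\C$ above $A$ is a uniformly random full chain of $[A,[n]]\cong\B_N$, so $\ave_{\C\in\sC_A}|\F\cap\C|=1+\hb_N(\F_1)$. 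Since the block of chains missing $\F$ contributes $0$ and $\sum_A|\sC_A|\le n!$, it suffices to prove $\hb_N(\F_1)\le 1$ for every $A\in\F$. Now $P$-freeness of $\F$ forces $\F_1$ not to contain a $2$-chain together with $m+1$ other sets, so $\F_1$ is either an antichain or has at most $m+2$ members. If $\F_1$ is an antichain, LYM together with Lemma~\ref{lub} gives $\hb_N(\F_1)=\sum_{F\in\F_1}1/\binom{N}{|F|}\le 1$. Otherwise $|\F_1|\le m+2$, and since all members of $\F$ have size at most $n-m$ while members of $\F_1$ strictly contain $A$, each member of $\F_1$ has size in $[1,N-m]\subseteq[1,N-1]$, hence weight $1/\binom{N}{|F|}\le 1/N$, so $\hb_N(\F_1)\le(m+2)/N\le 1$ whenever $N\ge m+2$. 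The residual values are harmless: for $N=m$ the interval $[1,N-m]$ is empty, so $\F_1=\emptyset$, and for $N=m+1$ every member of $\F_1$ is a singleton, so $\F_1$ is an antichain and is covered above. In all cases $\hb_N(\F_1)\le 1$, whence $\hb_n(\F)\le 2=e(P)$, establishing $m$-L-boundedness.

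The main obstacle is extracting the sharp exponent $m$. The naive approach --- greedily pulling out of $\F_1$ first a $2$-chain and then $m+1$ further sets, so as to build $P$, exactly as in Theorem~\ref{thm:palm}(iii) --- loses an error term of order $(\text{total size of the chains removed so far})/(n-|A|)$ at every step, and only succeeds once $m$ exceeds $\sum_{i=1}^{k-1}(\ell_i-1)=m+2$, the wrong bound. The remedy is to abandon chain-extraction and bound $\hb_N(\F_1)$ in a single step, using that $P$-freeness confines $\F_1$ to exactly two regimes --- flat (an antichain, where the LYM estimate is lossless) or tiny (at most $m+2$ sets, which the size restriction $[m,n-m]$ automatically renders light in $\B_N$) --- and then disposing of the two small values $N=m,m+1$ by hand. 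Verifying that these regimes are genuinely exhaustive is the only point requiring care.
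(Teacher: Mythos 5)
Your proposal is correct and follows essentially the same route as the paper's proof: the identical witness family $\binom{[n]}{n-m-1}\cup\binom{[n]}{n-m}\cup\{Z\}$ for the negative half, and for the positive half the min partition by $A=\min(\F\cap\C)$ with the same dichotomy on the part of $\F$ above $A$ (your ``antichain or at most $m+2$ sets'' is exactly the paper's ``no $3$-chain in $\F\cap[A,[n]]$, or else at most $m$ sets besides $A\subset B\subset C$''). The only differences are cosmetic bookkeeping: the paper bounds the block average by $1+\tfrac{1}{m+2}+\tfrac{2}{(m+1)(m+2)}+\tfrac{m}{m+2}\le 2$ using $n-|A|\ge m+2$ from the $3$-chain, while you use the cruder $(m+2)/N\le 1$ and dispose of $N=m,m+1$ by hand.
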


\begin{proof}
Consider the family
$\F=\binom{[n]}{n-m-1}\cup\binom{[n]}{n-m}\cup\F_0$, where $\F_0$
consists of a set in $\binom{[n]}{n-m+1}$.  For any set $F\in \F$,
at most $m+2$ sets strictly contain it, hence $\F$ is $P$-free. We
have $\hb(\F)>2=e(P)$, and so $P$ is not $(m-1)$-L-bounded.

Next consider any $P$-free family $\F$ of subsets of $[n]$ with
sizes in $[m,n-m]$. We apply the min partition on the set of full
chains. Let $\sC_A$ be any block. If $\F\cap[A,[n]]$ does not
contain a chain of size 3, then
$\ave_{\C\in \sC_A}|\F\cap \C|\le2$.
Suppose there is a chain of size
3 in $\F\cap[A,[n]]$, say $A\subset B\subset C$. There are at
most $m$ sets in $\F\cap[A,[n]]$, besides $A$, $B$, and $C$. Recall
that $\ave_{\C\in \sC_A}|\F\cap\C|$ is equal to the Lubell
function of the $P$-free family, obtained by removing the elements
of $A$ from each set in $\F\cap[A,[n]]$, in the smaller Boolean
lattice $\B_{n-|A|}$.  Here, $m+2\le n-|A|\le n-m$. By the size
condition, $\ave_{\C\in \sC_A}|\F\cap\C|$ is at most
$(1+\frac{1}{m+2}+\frac{2}{(m+1)(m+2)})+m(\frac{1}{m+2})\le 2$, as
$m\ge 1$. We conclude that $\hb(\F)\le 2$, which gives the
$m$-L-boundedness of $P$.
\end{proof}


\section{Constructing L-bounded Posets}\label{sec:CONS}

We have seen that L-bounded posets, including those that are
uniformly L-bounded or centrally L-bounded,  have  nice
properties.  This section contains methods to construct
L-bounded posets, thereby producing many more examples of
posets $P$ that satisfy the $\pi(P)=e(P)$ conjecture.
We begin with a construction using ordinal sums.

\begin{theorem}\label{thm:onetouni}
For any centrally L-bounded poset $P$, both $\op P$ and $P\po$ are
centrally L-bounded. Furthermore, $\op P\po$ is uniformly
$L$-bounded.
\end{theorem}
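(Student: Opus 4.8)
The plan is to leverage the three parameters compared in Section~\ref{sec:LUB}, showing that for $Q := \op P \po$ we have $e(Q) = \lambda(Q)$, and in fact the uniform bound $\hb_n(\G) \le e(Q)$ for \emph{all} $Q$-free families $\G$ of subsets of $[n]$, for every $n$. First I would record the easy arithmetic on $e$: writing $P' := \op P$ and noting $P' = \{\hat 0\}^+ \setminus \emptyset$ in the obvious sense, Lemma~\ref{lem:parameters}(iv) (applied with the added bottom element as the maximal element of a one-element $P_1 = \{p\}^-$, then dualized for the top) gives $e(\op P) = 1 + e(P)$, $e(P \po) = e(P) + 1$, and $e(Q) = e(\op P \po) = e(P) + 2$. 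Set $e := e(P)$, so $e(Q) = e+2$; we must show any $Q$-free $\G \subseteq 2^{[n]}$ has $\hb_n(\G) \le e+2$.

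The heart of the argument is a peeling of $\G$ from the bottom \emph{and} the top using the min-partition / max-partition idea already used in the proof of Theorem~\ref{thm:palm}(ii). Apply the min partition to the full chains relative to $\G$: blocks $\sC_A$ indexed by the minimum set $A = \min(\G \cap \C)$, plus one block of chains missing $\G$ entirely. Within a block $\sC_A$, let $\G_A := (\G \cap [A,[n]]) \setminus \{A\}$; then $\ave_{\C \in \sC_A} |\G \cap \C| = 1 + \ave_{\C \in \sC_A} |\G_A \cap \C|$, and the latter average is the Lubell function of $\G_A$ shifted into $\B_{n - |A|}$. Crucially, $\G_A$ must be $(P \po)$-free: if $\G_A$ contained $P \po$, then adjoining $A$ as a global minimum below all of it would exhibit $\op(P\po) = Q$ inside $\G$. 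Now iterate the \emph{same} device from the top on $\G_A$ (max partition), peeling off its maximum set $B$ within each max-block; the residual family $\G_{A,B} := (\G_A \cap [A,B]) \setminus \{B\}$ must be $P$-free, since otherwise $A$ below it and $B$ above it would produce $Q$. At that point central L-boundedness of $P$ applies to $\G_{A,B}$ — but one must be careful, because central L-boundedness only bounds $\hb$ of families of \emph{proper} subsets; the peeling of $A$ as a new bottom and $B$ as a new top is exactly what makes $\G_{A,B}$ a family of proper subsets of the interval $[A,B] \cong \B_{|B|-|A|}$, so $\hb(\G_{A,B}) \le e(P) = e$ follows. Unwinding, each double block contributes at most $1 + 1 + e = e+2$ to the relevant average, and averaging over blocks gives $\hb_n(\G) \le e+2 = e(Q)$.

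From $\hb_n(\G) \le e(Q)$ for all $n$ and all $Q$-free $\G$, Lemma~\ref{lub} gives $|\G| \le \Sigma(n, e(Q))$, and since $\B(n, e(Q))$ is $Q$-free, $Q$ is uniformly L-bounded by the definition in Section~\ref{sec:LBP}. For the first two assertions, that $\op P$ and $P \po$ are centrally L-bounded, I would run the one-sided version of the same peeling: for a $Q' := \op P$-free family $\G$ of \emph{proper} subsets of $[n]$, the min-partition peels a bottom set $A$ (with $A \ne \emptyset$, so $|A| \ge 1$) and leaves $\G_A$ which is $P$-free; but now $\G_A \subseteq [A,[n]] \setminus \{A\}$, and since $\G$ contained no $[n]$, $\G_A$ omits the top of $[A,[n]]$ as well, hence is a family of proper subsets of $[A,[n]] \cong \B_{n-|A|}$, so $\hb(\G_A) \le e(P)$ and the block contributes at most $1 + e(P) = e(\op P)$; averaging gives $\hb_n(\G) \le e(\op P)$ for families of proper subsets, which is exactly central L-boundedness. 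The case $P \po$ is dual.

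The step I expect to be the main obstacle is the bookkeeping in the double peeling: one must check that after peeling the bottom element $A$ from a min-block, the \emph{max} partition is applied correctly to $\G_A$ viewed inside $\B_{n-|A|}$ (not the original $\B_n$), that the two averaging operations compose to a genuine average over the refined blocks of $\sC_n$, and — most delicately — that in \emph{every} resulting double block the residual family is simultaneously missing its bottom \emph{and} its top so that central L-boundedness (which requires proper subsets) is legitimately invoked. The boundary cases where a block's residual family is empty, or where $|B| - |A|$ is too small for $\B(|B|-|A|, e(P))$ to make sense, need a separate trivial check, exactly as small-$n$ cases were dispatched in the proof of Theorem~\ref{thm:CLB}; I would handle these by observing that an empty residual contributes $0 \le e(P)$ and a residual in a tiny interval is bounded by its height, which is at most $e(P)$ by the $P$-freeness.
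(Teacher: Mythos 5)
Your proof is essentially the paper's: the first part is the min-partition peeling, with the observation that the residual family in each block consists of $P$-free \emph{proper} subsets of $[A,[n]]$ (because $A$ is removed and $[n]\notin\G$), and your bottom-then-top double peeling for $\op P\po$ is exactly the paper's min--max partition into blocks $\sC_{A,B}$, where $(\G\setminus\{A,B\})\cap[A,B]$ is $P$-free and proper in $[A,B]\cong\B_{|B|-|A|}$, so each block contributes at most $1+1+e(P)$. The one step that is wrong as stated is the appeal to Lemma~\ref{lem:parameters}(iv) for $e(\op P)=e(P)+1$ and $e(\op P\po)=e(P)+2$: taking $p$ to be the added bottom element forces $P_2=\{p\}^+$ to be all of $\op P$ and $P_1=\{p\}$ with $e(P_1)=0$, so the lemma only returns the trivial identity $e(\op P)=0+e(\op P)$; moreover the equality $e(\op P)=e(P)+1$ is false for general posets (for $P=d(\V_r)$ one has $\op P=\D_r$, with $e(\D_r)\rightarrow\infty$ while $e(P)=1$), though it does hold under the central L-boundedness hypothesis. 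Fortunately your argument only needs the lower bounds $e(\op P)\ge e(P)+1$ and $e(\op P\po)\ge e(P)+2$, and these follow at once from the definition of $e(P)$, as the paper argues: inside a union of $e(P)+1$ (resp.\ $e(P)+2$) consecutive levels, the sets lying strictly above a prospective bottom element (resp.\ strictly inside an interval $[A,B]$) occupy at most $e(P)$ consecutive levels and hence cannot contain $P$. With that substitution your proof goes through and coincides with the paper's.
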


\begin{proof}
Let $P$ be a centrally L-bounded poset. Let $\F$ be a $(\op P)$-free
family of subsets of $[n]$  containing neither $\emptyset$ nor $[n]$.
We again apply the min partition on the set of full chains. For
any block $\sC_A$, the subfamily $(\F\setminus \{A\})\cap[A,[n]]$
is $P$-free. Hence, it contributes at most $e(P)$ to
$\ave_{\C\in\sC_A}|\F\cap \C|$. Therefore, each block has
$\ave_{\C\in\sC_A}|\F\cap \C|\le e(P)+1$. Then $\hb_n(\F)\le
e(P)+1$. On the other hand, it is clear that the union of any
$e(P)+1$ consecutive levels is $(\op P)$-free. That $P\po$ is centrally
L-bounded follows, since this property is preserved by taking the dual.
The first part is proved.

For the second part, consider the union $Q$ of any $e(P)+2$
consecutive levels in $\B_n$.  An interval $[A,B]$ in $Q$ contains no
more than $e(P)$ consecutive levels, strictly between $A$ and $B$.
Then by definition of $e(P)$, $[A,B]\setminus\{A,B\}$ is $P$-free.
Hence, $Q$ is $(\op P\po)$-free, and so $e(P)+2\le e(\op P\po)$.

On the other hand, let $\F$ be a $(\op P\po)$-free family.
Apply the min-max partition~\cite{GriLiLu} on $\sC_n$ to get
blocks $\sC_{A,B}$ containing full chains $\C$ with $\min(\F\cap \C)=A$
and $\max(\F\cap \C)=B$ for pairs $A\subseteq B$.
One block contains chains that avoid $\F$.
If $\sC_{A,B}$ is a block in the partition, then
$(\F\setminus\{A,B\})\cap[A,B]$ is $P$-free.
Now, $(\F\setminus\{A,B\})\cap [A,B]$ can be viewed as a $P$-free
family in $\B_{|B|-|A|}$.
Hence,  it contributes no more than $e(P)$ to the average,
 $\ave_{\C\in\sC_{A,B}}|\F\cap \C|$.
 Adding in the contributions of $A$ and $B$, this average is
 then at most $e(P)+2$.
Thus, $\hb_n(\F)\le e(P)+2\le e(\op P\po)$, for any $(\op P\po)$-free family.
So the poset $\op P\po$ is uniformly L-bounded.
\end{proof}

Figure~\ref{fig:OBO} illustrates how to obtain a uniformly L-bounded poset
from the butterfly poset using Theorem~\ref{thm:onetouni}.
Recall that the butterfly poset $\B$ is centrally L-bounded.

\begin{figure}[ht]
\begin{center}
\begin{picture}(150,50)
\put(10,20){\circle*{4}}
\put(20,20){$\oplus$}
\put(40,15){\circle*{4}}
\put(60,15){\circle*{4}}
\put(40,25){\circle*{4}}
\put(60,25){\circle*{4}}
\put(40,15){\line(0,1){10}}
\put(60,15){\line(0,1){10}}
\put(40,15){\line(2,1){20}}
\put(60,15){\line(-2,1){20}}
\put(70,20){$\oplus$}
\put(90,20){\circle*{4}}
\put(100,20){$=$}
\put(130,5){\circle*{4}}
\put(120,15){\circle*{4}}
\put(140,15){\circle*{4}}
\put(120,25){\circle*{4}}
\put(140,25){\circle*{4}}
\put(130,35){\circle*{4}}
\put(130,5){\line(1,1){10}}
\put(130,5){\line(-1,1){10}}
\put(130,35){\line(1,-1){10}}
\put(130,35){\line(-1,-1){10}}
\put(120,15){\line(0,1){10}}
\put(140,15){\line(0,1){10}}
\put(120,15){\line(2,1){20}}
\put(140,15){\line(-2,1){20}}
\end{picture}
\end{center}
\caption{The poset $\op \B \po$ is uniformly L-bounded.}\label{fig:OBO}
\end{figure}

We next introduce an operation that involves large intervals of
posets.  For $1\le i\le k$ let $P_i$ be a poset having a large
interval $I_i$ (which may not be unique).
Let $P_1\oplus_I P_2\oplus_I\cdots\oplus_I P_k$
denote the sum of the posets $P_i$, where for
$1\le i\le k-1$, we identify the maximal element of $I_i$ with the
minimal element of $I_{i+1}$.  This operation
depends on the choice of the large intervals.  However, if all
$P_i$'s are L-bounded, then this poset is unique, since no L-bounded
poset has more than one large interval.

\begin{theorem}\label{thm:posetchain}
For $k,\ell\ge0$ let $P_i$ ($1\le i\le k$) and $Q_j$ ($1\le j\le
\ell$) be L-bounded posets with large intervals $I_i$ and $I'_j$,
resp.  Further, assume each $P_i$ has $\hat{0}$ and
each $Q_j$ has $\hat{1}$.  If $k=0$ or $\ell=0$, it means the corresponding
collection is empty. Then the poset
\[
R:=Q_1\oplus_I\cdots \oplus_I Q_{\ell}\oplus_I P_1\oplus_I\cdots\oplus_I P_k
\]
is L-bounded.
\end{theorem}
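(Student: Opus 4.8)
The plan is to prove L-boundedness of the composite poset by induction on the total number of summands $k+\ell$, peeling off one extremal summand at a time and invoking Lemma~\ref{lem:parameters} together with Proposition~\ref{lem:unilarge}. The base cases $k+\ell\le 1$ are immediate: a single L-bounded poset is L-bounded by hypothesis. For the inductive step, first consider the case $\ell\ge 1$. Write $R=Q_2\oplus_I\cdots\oplus_I Q_\ell\oplus_I P_1\oplus_I\cdots\oplus_I P_k$, which is L-bounded by induction. Let $p$ be the maximal element of the large interval $I'_1$ of $Q_1$, which is precisely the identified element joining $Q_1$ to $R$. Set $P^1=Q_1$ (the ``lower'' part, which has $\hat 0$ and is $\{p\}^-$ in... no: more precisely $Q_1=\{p\}^-$ fails in general, but $p$ is the maximal element of a large interval of $Q_1$, which is exactly the hypothesis needed in Lemma~\ref{lem:parameters}(iv)). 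Decompose the composite poset as $P^1\cup P^2$ where $P^1\cap P^2=\{p\}$ and $P^2=\{p\}^+$; then $P^2$ is the image of $R$ inside the composite, and $p$ is the minimal element of $I'_1$'s image... wait, $p$ is the maximal element of $I'_1$ and also the minimal element of the large interval of $Q_2$ (or of $P_1$ if $\ell=1$). So $P^2$ is isomorphic to $R$.

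Now apply Lemma~\ref{lem:parameters}: parts (i)--(iii) give $\lambda_n$, $\La(n,\cdot)$, and $\pi$ subadditivity, and crucially part (iv) gives $e(\text{composite})=e(Q_1)+e(R)$. For L-boundedness itself I would argue directly on Lubell functions. Suppose $Q_1$ is $m_1$-L-bounded and $R$ is $m_2$-L-bounded; set $m=\max(m_1,m_2)+1$ (a small slack to absorb the boundary level at $p$). Let $\F$ be a family of subsets of $[n]$ of sizes in $[m,n-m]$ that is free of the composite poset. Partition $\F=\F_1\cup\F_2$ exactly as in the proof of Lemma~\ref{lem:parameters}: $\F_1=\{S\in\F:\F\cap[S,[n]]\text{ contains }P^2\}$ and $\F_2=\F\setminus\F_1$, so that $\F_1$ is $P^1$-free (i.e., $Q_1$-free) and $\F_2$ is $P^2$-free (i.e., $R$-free). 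Since all sets of $\F_1,\F_2$ still lie in the size window $[m,n-m]\subseteq[m_1,n-m_1]\cap[m_2,n-m_2]$, the $m_1$-L-boundedness of $Q_1$ and $m_2$-L-boundedness of $R$ give $\hb(\F_1)\le e(Q_1)$ and $\hb(\F_2)\le e(R)$; adding, $\hb(\F)\le\hb(\F_1)+\hb(\F_2)\le e(Q_1)+e(R)=e(\text{composite})$. Hence the composite is $m$-L-bounded. The symmetric case $\ell=0$, $k\ge 1$ is handled dually: peel off $P_k$ from the top using the dual of Lemma~\ref{lem:parameters} (decompose as $\{p\}^-\cup\{p\}^+$ with $p$ the minimal element of the large interval $I_k$, which is a large interval of the dual poset).

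The one subtlety to check carefully — and the step I expect to be the main obstacle — is verifying that Lemma~\ref{lem:parameters}(iv) genuinely applies at each peeling step, i.e., that the identified element $p$ really is ``the maximal element of a large interval of $P_1$'' in the notation there, with $P_1$ playing the role of the summand being removed. This requires knowing that $I'_1$ (a large interval of $Q_1$) remains a large interval inside the composite, equivalently that $e$ of the composite does not exceed $e(Q_1)+e(R)$ in a way that destroys maximality; but this is exactly the content of (iv) applied inductively, so the argument is self-consistent provided we also track that the relevant large intervals of the pieces are preserved under the $\oplus_I$ operation. Since all $P_i,Q_j$ are L-bounded, Proposition~\ref{lem:unilarge} guarantees each has a \emph{unique} large interval, which makes the $\oplus_I$ construction well-defined and ensures the peeling is canonical; one should note that the large interval of the composite is the $\oplus_I$-concatenation of the large intervals of all the summands, and record this explicitly as it is needed to continue the induction. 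A secondary minor point is the choice of $m$: one must confirm the size window only shrinks by a bounded amount at each of the $k+\ell-1$ identifications, so the final $m$ is finite (indeed $m=\max_i(\text{boundedness level of the }i\text{th summand})+k+\ell-1$ suffices), which is enough for L-boundedness.
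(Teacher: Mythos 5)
Your overall architecture (induct on the number of summands, split a free family via the partition of Lemma~\ref{lem:parameters}, add the Lubell bounds, use additivity of $e$, take $m$ to be the maximum of the boundedness levels) is the paper's, but the inductive step you actually propose has a genuine gap in its key hypothesis check. You peel off $Q_1$ at the bottom and apply Lemma~\ref{lem:parameters} with lower part $Q_1$ and upper part $R=Q_2\oplus_I\cdots\oplus_I Q_\ell\oplus_I P_1\oplus_I\cdots\oplus_I P_k$, asserting that $\{p\}^+\cong R$, where $p$ is the glue point (the top of $I'_1$). This is false in general for $\ell\ge 2$: the $Q_j$ are only assumed to have a $\hat 1$, so the glue point, which is the minimal element of the large interval of $Q_2$, is a minimal element of $Q_2$ but need not be a $\hat 0$ of $Q_2$; any element of $Q_2$ incomparable to it (take, e.g., $Q_2=d(\V(3,2))$, whose extra leaf is such an element) lies in $R$ but not in $\{p\}^+$. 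The hypothesis $P_2=\{p\}^+$ is not cosmetic: the proof that $\F_2$ is $P_2$-free uses precisely that $p$ is the minimum of $P_2$ (so that an embedding of $P_2$ with $p\mapsto S$ lies entirely in $\F\cap[S,[n]]$). With $P_2=R\supsetneq\{p\}^+$ neither the conclusion ``$\F_2$ is $R$-free'' nor part (iv) is available as invoked, so the inequality $\hb(\F)\le e(Q_1)+e(R)$ and the identity $e(\mbox{composite})=e(Q_1)+e(R)$ are both unjustified at this step. (You sensed the problem in your parenthetical hesitation, but the self-consistency remark does not repair it. Note also that, contrary to what you wrote, $Q_1=\{p\}^-$ \emph{does} hold here, since the top of a large interval of $Q_1$ must be $\hat 1$ of $Q_1$; what fails is the condition on the other side.)

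There are two clean repairs. One is to state and use the dual of Lemma~\ref{lem:parameters} (decompose with $P_1=\{p\}^-$ and partition full chains by $\max(\F\cap\C)$ instead of $\min$); since $p=\hat 1$ of $Q_1$, this applies to your bottom-peeling and the rest of your computation then goes through. The other is the paper's route: build $P_1\oplus_I\cdots\oplus_I P_k$ by attaching each new $P_i$ at the \emph{top}, where the glue point is the $\hat 0$ of the attached piece so that $\{p\}^+$ is exactly that piece and the lemma applies verbatim; handle the $Q$-chain by duality; and finish with one application of the lemma to $Q\oplus_I P$, whose glue point is simultaneously $\hat 1$ of the $Q$-part and $\hat 0$ of the $P$-part. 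Your remaining points are fine but minor: the ``$+1$'' slack in $m$ is unnecessary ($m=\max\{m_i,m'_j\}$ suffices, since the window $[m,n-m]$ is passed unchanged to the subfamilies), and you are right that one must record that the concatenation of the large intervals is a large interval of the partial composite (the paper asserts this) so that the next $\oplus_I$ step is well-defined and Lemma~\ref{lem:parameters}(iv) can be reapplied.
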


\begin{proof}
Suppose each $P_i$ is $m_i$-L-bounded and each $Q_j$ is
$m'_j$-L-bounded. Let
$m:=\max\{m_1,\ldots,m_k,m'_1,\ldots,m'_{\ell}\}$. Hence, all
$P_i$'s and $Q_j$'s are $m$-L-bounded. We show $R$ is
$m$-L-bounded.

First suppose $\ell=0$. We use induction on the number
$k$. For $k\le 1$, this is trivial. Suppose the theorem holds for
some $k\ge1$. Let $P_1,\ldots,P_{k+1}$ be $m$-L-bounded posets
such that each $P_i$ has $\hat{0}$ and large interval $I_i$. By
induction, $P:=P_1\oplus_I\cdots\oplus_I P_{k}$ is $m$-L-bounded.
Furthermore, $e(P)=\sum_{i=1}^{k}e(P_i)$ and $e(P\oplus_I
P_{k+1})=\sum_{i=1}^{k+1}e(P_i)$, by Lemma~\ref{lem:parameters}.
Also, note that $I_1\oplus_I\cdots\oplus_I I_{k}$ is a large
interval in $P$. Consider any $(P\oplus_I P_{k+1})$-free family
$\F$ of subsets of $[n]$, where for each $F\in \F$, $m\le|\F|\le n-m$.
We partition $\F$ into $\F_1$ and $\F_2$, as we did in
Lemma~\ref{lem:parameters}, so that $\F_1$ is $P$-free and $\F_2$
is $P_{k+1}$-free.  This gives $\hb(\F_1)\le e(P)$ and
$\hb(\F_2)\le e(P_{k+1})$, and so, $\hb(\F)\le
e(P_{k+1})+e(P)=e(P\oplus_I P_{k+1})$. Therefore, $P\oplus_I
P_{k+1}$ is $m$-L-bounded.

If $k=0$, then we can use the same induction
argument to show $Q:=Q_1\oplus_I\cdots \oplus_I Q_{\ell}$ is
$m$-L-bounded by considering the dual case.

Finally, suppose $k,\ell>0$. Let $P$ and $Q$ be as above.
Consider a $(Q\oplus_I P)$-free family $\F$ of subsets of sizes in
$[m,n-m]$.  Partition $\F$ into $\F_1$ and $\F_2$, such that
$\F_1$ is $Q$-free, and $\F_2$ is $P$-free, as before. Since $P$
and $Q$ are both $m$-L-bounded, we have
\[\hb(\F)=\hb(\F_1)+\hb(\F_2)\le e(P)+e(Q)=e(P\oplus_I Q).\]
This proves that $R=P\oplus_I Q$ is $m$-L-bounded.
\end{proof}

As an example, the poset $\D_3$ is uniformly L-bounded and is
itself a large interval. Now identify the maximal element in each
$\D_3$ with the minimal element in another copy of $\D_3$,
for several consecutive  $\D_3$'s,
as shown in Figure~\ref{fig:DPD}.
This produces a ``diamond-chain'' that is uniformly L-bounded .

\begin{figure}[h]
\begin{center}
\begin{picture}(40,85)
\put(20,0){\circle*{4}}
\put(0,10){\circle*{4}}
\put(40,10){\circle*{4}}
\put(20,10){\circle*{4}}
\put(20,20){\circle*{4}}
\put(20,0){\line(2,1){20}}
\put(20,0){\line(-2,1){20}}
\put(20,20){\line(2,-1){20}}
\put(20,20){\line(-2,-1){20}}
\put(20,0){\line(0,1){20}}
\put(0,30){\circle*{4}}
\put(40,30){\circle*{4}}
\put(20,30){\circle*{4}}
\put(20,40){\circle*{4}}
\put(20,20){\line(2,1){20}}
\put(20,20){\line(-2,1){20}}
\put(20,40){\line(2,-1){20}}
\put(20,40){\line(-2,-1){20}}
\put(20,20){\line(0,1){20}}
\put(0,70){\circle*{4}}
\put(40,70){\circle*{4}}
\put(20,60){\circle*{4}}
\put(20,70){\circle*{4}}
\put(20,80){\circle*{4}}
\put(20,60){\line(2,1){20}}
\put(20,60){\line(-2,1){20}}
\put(20,80){\line(2,-1){20}}
\put(20,80){\line(-2,-1){20}}
\put(20,60){\line(0,1){20}}
\put(19,45){$\vdots$}
\end{picture}
\end{center}
\caption{The poset $\D_3\oplus_I\cdots\oplus_I \D_3$
is uniformly L-bounded.}\label{fig:DPD}
\end{figure}


In Section~\ref{sec:FAN}, we mentioned that there are L-bounded
posets that are not lower-L-bounded. For any $m\ge 1 $, consider
$P_1=\V(3,2,\ldots,2)$ , where there are $m+1$ 2's.
Let  $P_2$ be the dual of $P_1$.
By Theorem~\ref{thm:posetchain}, the ``crab poset",
$P=P_2\oplus_I P_1$, illustrated in Figure~\ref{fig:CRB},
is $m$-L-bounded.
The family
$\F=\bigcup_{i=0}^4\binom{[n]}{i}$ is $P$-free, and
$\hb(\F)=5>e(P)$ for any $n\ge 4$. Thus, $P$ is not
lower-L-bounded.

\begin{figure}[h]
\begin{center}
\begin{picture}(100,90)
\put(50,45){\circle*{5}}
\put(30,65){\circle*{5}}
\put(20,65){\circle*{5}}
\put(10,85){\circle*{5}}
\put(40,65){$\cdots$}
\put(70,65){\circle*{5}}
\put(80,65){\circle*{5}}
\put(50,45){\line(1,1){20}}
\put(50,45){\line(3,2){30}}
\put(50,45){\line(1,2){8}}
\put(50,45){\line(0,1){16}}
\put(50,45){\line(-1,2){8}}
\put(50,45){\line(-1,1){20}}
\put(50,45){\line(-3,2){30}}
\put(20,65){\line(-1,2){10}}
\put(30,75){$\overbrace{\hspace{1.8cm}}$}
\put(45,90){$m+1$}
\put(30,25){\circle*{5}}
\put(20,25){\circle*{5}}
\put(10,5){\circle*{5}}
\put(40,20){$\cdots$}
\put(70,25){\circle*{5}}
\put(80,25){\circle*{5}}
\put(50,45){\line(1,-1){20}}
\put(50,45){\line(3,-2){30}}
\put(50,45){\line(1,-2){8}}
\put(50,45){\line(0,-1){16}}
\put(50,45){\line(-1,-2){8}}
\put(50,45){\line(-1,-1){20}}
\put(50,45){\line(-3,-2){30}}
\put(20,25){\line(-1,-2){10}}
\put(30,17){$\underbrace{\hspace{1.8cm}}$}
\put(45,0){$m+1$}
\end{picture}
\end{center}
\caption{An L-bounded poset that is not lower-L-bounded.}\label{fig:CRB}
\end{figure}


We can construct new L-bounded posets not only ``vertically'' as
above but also ``horizontally'':  Let $P_1,\ldots,P_k$ be posets
with $\hat{0}$. Then define the {\em wedge\/}
$\V(P_1,\ldots,P_k)$ to be the poset obtained by identifying the
$\hat{0}$'s of the posets.  The fan poset we introduced earlier is
the special case where the posets $P_i$ are paths.

\begin{lemma}\label{lem:eofjoin}
Let $P_1,\ldots,P_k$ be posets with $\hat{0}$, ordered so that
$e_1 \ge \cdots \ge e_k$, where $e_i=e(P_i)$.
Let $P$ be the wedge $\V(P_1,\ldots,P_k)$.
Then, $e(P)=e_1$.
\end{lemma}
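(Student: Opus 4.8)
The plan is to establish the two inequalities $e(P) \ge e_1$ and $e(P) \le e_1$ separately, where $P = \V(P_1,\ldots,P_k)$.

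For the lower bound $e(P) \ge e_1$, I would argue that since $P_1$ is a subposet of $P$ (it is one of the ``arms'' of the wedge, sharing the $\hat 0$), any family containing $P$ must contain $P_1$. Hence a family that is $P_1$-free is automatically $P$-free, so the union of any $e_1$ consecutive levels $\binom{[n]}{s}\cup\cdots\cup\binom{[n]}{s+e_1-1}$, which is $P_1$-free by definition of $e_1$, is also $P$-free; thus $e(P)\ge e_1$.

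For the upper bound $e(P) \le e_1$, I would show that for a suitable choice of $n$ and $s$, the family $\F = \bigcup_{i=0}^{e_1}\binom{[n]}{s+i}$ (the union of $e_1+1$ consecutive levels) contains $P$, which forces $e(P) < e_1+1$, i.e. $e(P)\le e_1$. To do this, I would choose $n$ large enough that each $P_j$ embeds into the union of $e_j+1 \le e_1+1$ consecutive levels of some Boolean lattice, with its $\hat 0$ mapped to a set of a prescribed size; then, inside the interval $[\emptyset,[n]]$ of a large $\B_n$, starting from a common set $S$ of size $s$ (playing the role of the shared $\hat 0$), I would embed the $k$ arms $P_1,\ldots,P_k$ into the interval $[S,[n]]$, using \emph{disjoint} ground-element blocks above $S$ for the different arms so that the only overlap among the images of the $P_j$'s is at $S$ itself. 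Since each arm $P_j$ fits into $e_j+1 \le e_1+1$ levels and they all start at level $|S|=s$, the whole image lies within $e_1+1$ consecutive levels $\binom{[n]}{s}\cup\cdots\cup\binom{[n]}{s+e_1}$. By relabeling this shows $\F$ contains $P$ for appropriate $n,s$, hence $e(P)\le e_1$. (Note that Lemma~\ref{lem:parameters}(iv) handles the two-block case $k=2$ with $P_2=\{p\}^+$; here the situation is different since all $P_i$ share only the \emph{minimum}, so I cannot directly invoke it, though the level-counting bookkeeping is analogous.)

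The main obstacle will be the embedding argument for the upper bound: I must verify that the disjoint-block construction genuinely realizes $P$ as a weak subposet, i.e. that the injection preserves order and, crucially, introduces no \emph{spurious} order relations that would be harmless — here weak subposet containment only requires preserving existing relations, so extra relations are fine, making this cleaner than it might first appear. The real care is in the level accounting: ensuring that each $P_j$ can be placed with $\hat 0_j \mapsto S$ and all other elements of $P_j$ mapped to sets of sizes in $[s+1, s+e_j]$, which requires knowing $e(P_j)=e_j$ gives an embedding of $P_j$ into $e_j+1$ consecutive levels \emph{with the $\hat 0$ on the bottom level} — this follows because $P_j$ has a $\hat 0$, so in any embedding into $e_j+1$ consecutive levels the image of $\hat 0_j$ must lie at or below every other image, hence on the lowest occupied level, which we may take to be level $s$. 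Assembling these embeddings over disjoint blocks then completes the proof.
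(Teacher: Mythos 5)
Your proposal is correct and follows essentially the same route as the paper: the lower bound via $P_1$-freeness of $e_1$ consecutive levels, and the upper bound by embedding each arm $P_j$ into $e_j+1$ consecutive levels over a disjoint block of ground elements above a common bottom set, which is exactly the paper's construction in $\B_{kn}$ with the shared $\hat{0}$ mapped to $A_1\cup\cdots\cup A_k$. You even make explicit the small bookkeeping point the paper leaves implicit, namely that the image of $\hat{0}_j$ sits on the lowest occupied level so all other elements land within the next $e_j$ levels.
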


\begin{proof}
We have $e(P)\ge e_1$, since $P$ contains $P_1$.
Let $n$ be large enough so that for all $i$, there exists
integer $s_i$ such that the family
$\F_i=\bigcup_{j=0}^{e_i}\binom{[n]}{s_i+j}$
contains $P_i$ as a subposet. We claim that the family
$\F=\bigcup_{j=0}^{e_1}\binom{[kn]}{s+j}$ contains $P$,
where $s=s_1+\cdots+s_k$.

For each $\F_i$ we relabel the elements in the underlying set by
$1+(i-1)n,2+(i-1)n,\ldots,in$. Let $A_i\in\F_i$ be the $\hat{0}\in
P_i$. For any $i$, if $S\in \F_i$ is an element $p\in P_i$, then
the set $(S\cup A_1\cup\cdots \cup A_k)\in \F$  will be the
element $p\in P_i\subset P$. This shows that $\F$ contains $P$.
Hence, $e(P)\le e_1$.
\end{proof}


\begin{theorem}\label{thm:joinULB}
Let $k\ge2$.  Let $P_1,\ldots,P_k$ be uniformly L-bounded posets
with $\hat{0}$, such that $e_1\ge \cdots \ge e_k$, where
$e_i=e(P_i)$. Then the wedge $P=\V(P_1,\ldots,P_k)$ is
lower-L-bounded. In addition, if $e_1>e_2$, then $P$ is L-bounded.
\end{theorem}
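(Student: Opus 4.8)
The plan is to adapt the argument used for fans in Theorem~\ref{thm:palm}(iii)--(iv). Write $e_i=e(P_i)$ and $c=|P|=1+\sum_{i=1}^k(|P_i|-1)$; by Lemma~\ref{lem:eofjoin} we have $e(P)=e_1$. Given a $P$-free family $\F$ of subsets of $[n]$, I would apply the min partition of the full chains $\sC_n$ following~\cite{GriLiLu}: this produces blocks $\sC_A$ for $A\in\F$, plus one block of chains missing $\F$, and one has $\ave_{\C\in\sC_A}|\F\cap\C|=\hb_m(\F_A)$, where $m=n-|A|$ and $\F_A=\{S\setminus A:S\in\F,\ S\supseteq A\}$ is viewed as a family in $\B_m$. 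Since $\hb(\F)$ is a weighted average of the block averages and the missing block contributes $0$, whenever $\hb(\F)$ exceeds a threshold $t$ some block satisfies $\hb_m(\F_A)>t$. Two facts about such a block are used repeatedly: $\emptyset\in\F_A$ (the image of $A$), and---since every set of $\F$ has size $<n$ in both size regimes below---every nonempty $S\in\F_A$ has $1\le|S|\le m-1$, so deleting one such set lowers $\hb_m$ by exactly $1/\binom{m}{|S|}\le 1/m$.

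The heart of the proof is an extraction step: setting $\G_0=\F_A$ and defining $\G_i=\G_{i-1}\setminus X_i$ inductively as below, \emph{if $\hb_m(\G_{i-1})>e_i$ holds at every stage $i=1,\dots,k$, then $\F_A$, and hence $\F$, contains $P$.} Indeed, when $\hb_m(\G_{i-1})>e_i$, uniform L-boundedness of $P_i$ forces a weak-subposet copy of $P_i$ inside $\G_{i-1}$; its image of $\hat 0$ can be replaced by $\emptyset$ (valid since $\emptyset\in\G_{i-1}$---$\emptyset$ is never deleted---and $\emptyset\subsetneq$ every image of an element above $\hat 0$), so the images $X_i$ of $P_i\setminus\{\hat 0\}$ form $|P_i|-1$ nonempty sets of $\G_{i-1}$. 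The sets $X_1,\dots,X_k$ are then pairwise disjoint (each $X_i\subseteq\G_{i-1}$, from which $X_1,\dots,X_{i-1}$ have already been removed) and none contains $\emptyset$, so sending $\hat 0\mapsto\emptyset$ and $P_i\setminus\{\hat 0\}$ onto $X_i$ by the chosen embeddings assembles a single order embedding of the wedge $P$ into $\F_A$: inside each $P_i$ order is preserved by construction, and the only cross-relations in $P$ are $\hat 0<x$, handled by $\emptyset\subsetneq$ everything. Composing with $R\mapsto R\cup A$ puts this copy inside $\F$, contradicting $P$-freeness. Since at most $c-1$ deletions occur in all, $\hb_m(\G_{i-1})\ge\hb_m(\F_A)-(c-1)/m$ throughout, so it only remains to make $(c-1)/m$ sufficiently small, and here the two assertions diverge.

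For lower-L-boundedness (where $e_1>e_2$ is \emph{not} assumed): fix $\beta\in(\tfrac12,1)$ and $\varepsilon>0$, suppose some $P$-free $\F$ with all sets of size $<\beta n$ has $\hb(\F)>e_1+\varepsilon$, and pick a block with $\hb_m(\F_A)>e_1+\varepsilon$. Then $|A|<\beta n$ gives $m>(1-\beta)n$, so $(c-1)/m<(c-1)/((1-\beta)n)<\varepsilon$ once $n>(c-1)/((1-\beta)\varepsilon)$, whence $\hb_m(\G_{i-1})>e_1\ge e_i$ at every stage and the extraction produces a copy of $P$ in $\F$, a contradiction; thus $N(\beta,\varepsilon)=\lceil(c-1)/((1-\beta)\varepsilon)\rceil+1$ suffices. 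For L-boundedness when $e_1>e_2$: take the $n$-independent constant $M=c$, consider a $P$-free $\F$ with all sets of size in $[M,n-M]$, suppose $\hb(\F)>e_1$, and pick a block with $\hb_m(\F_A)>e_1$. Since $A\in\F$ forces $|A|\le n-M$, we get $m\ge M=c$, so $(c-1)/m\le(c-1)/c<1$; hence $\hb_m(\G_0)>e_1$ settles stage $1$, and for $i\ge 2$, $\hb_m(\G_{i-1})>e_1-1\ge e_2\ge e_i$, the first inequality using that the $e_j$ are integers with $e_1>e_2$. The extraction again gives a copy of $P$ in $\F$, a contradiction, so every such $\F$ has $\hb(\F)\le e_1=e(P)$; that is, $P$ is $M$-L-bounded and therefore L-bounded.

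The step I expect to require the most care is the extraction itself---verifying that the $k$ separately obtained copies of $P_i\setminus\{\hat 0\}$, after the uniform substitution $\hat 0\mapsto\emptyset$, genuinely glue into one injective order-preserving map of the wedge (injectivity from the disjointness of the $X_i$; order-preservation from $\emptyset$ lying below everything; the substitution being legitimate precisely because $\hat 0$ is the minimum of each $P_i$). The Lubell bookkeeping---that a bounded number of deletions from a family living in $\B_m$ with $m$ linear in $n$ (lower case) or $m\ge c$ (bounded case) perturbs $\hb$ negligibly---is exactly the device already used for fans in Theorem~\ref{thm:palm}, and the hypothesis $e_1>e_2$ enters only to convert ``$\hb>e_1$ after deletions'' into ``$\hb>e_i$'' without an $\varepsilon$ cushion, using the integer gap $e_1\ge e_2+1$.
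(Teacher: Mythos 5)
Your proof is correct and takes essentially the same route as the paper's: apply the min partition, use uniform L-boundedness of each $P_i$ to iteratively extract disjoint copies of $P_i\setminus\{\hat 0\}$ above $A$ (with the image of $\hat{0}$ moved down to $A$), and bound the Lubell loss from the deletions by roughly $(|P|-1)/(n-|A|)$, with the integer gap $e_1>e_2$ absorbing that loss in the L-bounded case and the $\varepsilon$ cushion absorbing it in the lower-L-bounded case. You even write out in full the lower-L-boundedness half that the paper only sketches by analogy with Theorem~\ref{thm:palm}(iv), and your careful gluing of the $k$ embeddings at $\emptyset$ is exactly the step the paper treats as routine.
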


\begin{proof} This is a generalization of Theorem~\ref{thm:palm} (iii)
and (iv), and the proof is similar.
Here we present the case $e_1>e_2$,
but omit the details of the proof of lower-L-boundedness.
We have $e(P)=e_1$ by Lemma~\ref{lem:eofjoin}. Let
$p_i=|P_i|$, and put
$m=\sum_{i=1}^{k-1}(p_i-1)$.

Consider any $P$-free family $\F$ of
subsets of $[n]$ with sizes in $[m,n-m]$.
Suppose $\hb(\F)\le e_1$ does not hold. Apply the min partition on $\sC_n$, and
let $\sC_A$ be a block with $\ave_{\C\in\sC_A}|\F\cap\C|>e_1$.
Because $P_1$ is uniformly L-bounded, we conclude that $\F\cap
[A,[n]]$ contains $P_1$ as as subposet.
Define $\F_1=(\F\setminus\{A\})\cap[A,[n]]$.
Now let $\G_1$ be a subfamily of $\F_1$ with size $p_1-1$,
such that $\G_1\cup\{A\}$ contains $P_1$ as a subposet,
and such that $A$ is the $\hat{0}$.
It is straightforward to find disjoint subfamilies $\G_i\subseteq \F_i$,
where $\F_{i+1}=\F_i\setminus\G_i$, such that each $\G_i$
together with $A$ contains $P_i$ as a subposet.
This is because, for $2\le i\le k$,
\[
\ave_{\C\in \sC_A}|(\F_i\cup \{A\})\cap \C|
>e_1-\sum_{j=1}^{i-1}(p_j-1)/(n-|A|)>e_2\ge e_i,
\]
 and because $P_i$ is uniformly L-bounded. Then
$(\bigcup_{i=1}^k\G_i)\cup\{A\}$ contains $P=\V(P_1,\ldots,P_k)$,
which is impossible. Thus,  we must have
$\ave_{\C\in\sC_A}|\F\cap\C|\le e_1$ for every block
$\sC_A$, and so $\hb(\F)\le e_1$.
It means that $P$ is $m$-L-bounded.
\end{proof}



\Remark Burcsi and Nagy~\cite{BurNag} define a similar
construction method to produce a class of posets that
satisfies $\pi(P)=e(P)=\frac{|P|+h(P)}{2}$.
This class includes some of our L-bounded posets.

\section{Constructions with lower-L-bounded posets}\label{sec:BLUP}

The operations above on L-bounded posets  are
also useful for constructing lower L-bounded posets,
which then gives us additional new posets satisfying
the $\pi=e$ conjecture.  Note that by duality
we can obtain similar results for upper-L-bounded posets.  The
first two results are analogous to those in the last section.

\begin{theorem}\label{thm:LLBchain}
For $1\le i\le k$, let $P_i$ be a lower-L-bounded poset with
$\hat{0}$ and with a large interval $I_i$. Then
\[
P_1\oplus_I\cdots\oplus_I P_k
\]
is lower-L-bounded.
\end{theorem}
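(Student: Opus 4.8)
The plan is to mimic the induction used in Theorem~\ref{thm:posetchain}, but with lower-L-boundedness in place of $m$-L-boundedness, and to be careful that the partition of Lemma~\ref{lem:parameters} interacts well with the size restriction ``$|F| < \beta n$'' that appears in the definition of lower-L-boundedness. First I would set $P = P_1\oplus_I\cdots\oplus_I P_k$ and recall from Lemma~\ref{lem:parameters}(iv) (applicable because each $P_i$ has $\hat 0$, so $P_1\oplus_I\cdots\oplus_I P_{k-1}$ plays the role of $P_1$ there and $P_k=\{p\}^+$ for the identified element $p$) that $e(P)=\sum_{i=1}^k e(P_i)$; also $I_1\oplus_I\cdots\oplus_I I_{k-1}$ is a large interval of $P_1\oplus_I\cdots\oplus_I P_{k-1}$, which keeps the induction hypothesis applicable. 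The base case $k=1$ is the hypothesis, so the real content is the inductive step: assuming $P' := P_1\oplus_I\cdots\oplus_I P_{k-1}$ is lower-L-bounded, show $P := P'\oplus_I P_k$ is lower-L-bounded.

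For the inductive step, fix $\beta\in(\tfrac12,1)$ and $\varepsilon>0$. Let $\F$ be a $P$-free family of subsets of $[n]$ all of size $<\beta n$. Apply the partition of Lemma~\ref{lem:parameters}, writing $\F = \F_1\cup\F_2$ where $\F_1$ is $P'$-free and $\F_2$ is $P_k$-free, and $\hb(\F)\le \hb(\F_1)+\hb(\F_2)$ (this is exactly inequality~(i) of that lemma, and the construction of $\F_1,\F_2$ only removes sets, so both subfamilies still consist of sets of size $<\beta n$). Now I would pick $\beta'\in(\beta,1)$ — or rather it already suffices to use $\beta$ itself — and apply lower-L-boundedness of $P'$ with parameters $(\beta,\varepsilon/2)$ to get $\hb(\F_1)\le e(P')+\varepsilon/2$ for $n\ge N_1(\beta,\varepsilon/2)$, and lower-L-boundedness of $P_k$ with parameters $(\beta,\varepsilon/2)$ to get $\hb(\F_2)\le e(P_k)+\varepsilon/2$ for $n\ge N_2(\beta,\varepsilon/2)$. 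Setting $N(\beta,\varepsilon)=\max\{N_1,N_2\}$ gives $\hb(\F)\le e(P')+e(P_k)+\varepsilon = e(P)+\varepsilon$ for all $n\ge N$, which is the required bound.

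The one subtlety worth flagging is that the definition of lower-L-boundedness for $P_k$ quantifies over $P_k$-free families of subsets of $[n]$, and our $\F_2$ is such a family sitting inside the full Boolean lattice $\B_n$ (not in some interval), so there is no need to pass to a smaller cube or re-scale $\beta$ — the size bound $|F|<\beta n$ is inherited verbatim. If instead one organized the argument via the interval $[A,[n]]$ coming from a min-partition, the analogue of $\beta n$ in the smaller lattice would shift and one would have to enlarge $\beta$, as in the proof of Theorem~\ref{thm:palm}(iv); the Lemma~\ref{lem:parameters} partition avoids that entirely, which is why I would use it. The main (and only real) obstacle is therefore purely bookkeeping: verifying that $\F_1$ and $\F_2$ remain within the size constraint after the partition — but since the partition only deletes sets, this is immediate. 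I would also remark that the dual statement for upper-L-bounded posets (identifying $\hat 1$'s, i.e. $Q_1\oplus_I\cdots\oplus_I Q_\ell$ with each $Q_j$ having $\hat 1$) follows by applying this theorem to $d(P)$, and the ``mixed'' version combining lower-L-bounded $P_i$'s below with upper-L-bounded $Q_j$'s above can be handled by one more application of the Lemma~\ref{lem:parameters} split at the identified element.
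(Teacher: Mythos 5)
Your proof is correct and takes essentially the same route as the paper: both split a $P$-free family via the partition from Lemma~\ref{lem:parameters} into a part free of $P_1\oplus_I\cdots\oplus_I P_{k-1}$ and a part free of $P_k$ (both inheriting the size restriction), then invoke lower-L-boundedness of the two pieces together with the additivity $e(P)=\sum_i e(P_i)$. The only cosmetic differences are that the paper argues contrapositively with $2\varepsilon$ over infinitely many $n$ and reduces immediately to the case $k=2$, whereas you give the direct $\varepsilon/2$ estimate with $N=\max\{N_1,N_2\}$ and spell out the induction on $k$.
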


\begin{proof}
By iterating the argument it suffices to prove this for $k=2$.
Let $P=P_1\oplus_I P_2$. By
Lemma~\ref{lem:parameters}, we have $e(P)=e(P_1)+e(P_2)$.
Let $\beta\in(\frac{1}{2},1)$ and $\varepsilon>0$.
For $i=1,2$, since $P_i$ is lower-L-bounded,
there exists
$N_i=N_i(\beta,\frac{\varepsilon}{2})$
such that for all $n\ge N_i$,
every $P_i$-free family $\F$, containing subsets of size at most $\beta n$,
satisfies $\hb(\F)< e(P_i)+\frac{\varepsilon}{2}$.
Set $N=\max\{N_1,N_2\}$, and suppose $n\ge N$.
Consider a $P$-free family $\F$ of subsets of $[n]$, each of size at most $\beta n$.
As in the proof of Lemma~\ref{lem:parameters}, one can
split $\F$ into a $P_1$-free $\F_1$ and a $P_2$-free $\F_2$.
We have $\hb(\F_i)<e(P_i)+\frac{\varepsilon}{2}$ for $i=1,2$.
Then $\hb(\F)=\hb(\F_1)+\hb(\F_2)<e(P_1)+e(P_2)+\varepsilon=e(P)+\varepsilon$.
This means that $P$ is lower L-bounded.
\end{proof}

\begin{theorem}\label{thm:joinLLB}
For $1\le i\le k$, let $P_i$ be a lower-L-bounded poset with
$\hat{0}$. Then the wedge $P=\V(P_1,\ldots,P_k)$ is
lower-L-bounded.
\end{theorem}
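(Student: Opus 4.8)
The plan is to follow the same strategy used in Theorem~\ref{thm:LLBchain}, namely to reduce to a partition argument and invoke the lower-L-boundedness of the pieces. By Lemma~\ref{lem:eofjoin}, after ordering the posets so that $e_1 \ge \cdots \ge e_k$ with $e_i = e(P_i)$, we have $e(P) = e_1$. Suppose, toward a contradiction, that $P$ is not lower-L-bounded. Then there exist $\beta \in (\frac{1}{2},1)$ and $\delta > 0$ such that, for infinitely many $n$, there is a $P$-free family $\F$ of subsets of $[n]$, each of size less than $\beta n$, with $\hb(\F) > e_1 + \delta$. Set $\varepsilon = \delta/k$.

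Next I would apply the min partition to $\sC_n$ relative to $\F$, obtaining blocks $\sC_A$ indexed by the possible minima $A$ of $\F \cap \C$ (together with the one block of chains avoiding $\F$). Since the blocks average to $\hb(\F) > e_1 + \delta$, some block $\sC_A$ has $\ave_{\C \in \sC_A}|\F \cap \C| > e_1 + \delta$. As in the proof of Theorem~\ref{thm:palm}(iv), the point is to extract $k$ disjoint ``copies'' living above $A$: working inside the Boolean lattice $[A,[n]]$, which (after deleting $A$) looks like $\B_{n-|A|}$ with the inherited size restriction (sets of size less than $\beta n - |A| \le (1-(1-\beta))\,(n-|A|)$ in relative terms, so the size condition persists up to a $\beta' \in (\frac12,1)$), I would peel off the family greedily. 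The subtlety is that, unlike the uniformly L-bounded case, I cannot immediately conclude a single $P_1$ appears; instead I use that the contribution $\hb((\F \cap [A,[n]]) \setminus \{A\})$ exceeds $e_1 + \delta - o(1) \ge e_2 + \varepsilon \ge \cdots$, and repeatedly apply lower-L-boundedness of the individual $P_i$: for each $i$, if the residual family (after removing the previously chosen subfamilies $\G_1, \ldots, \G_{i-1}$, which have bounded total size $\sum (|P_j|-1)$, hence change $\hb$ by only $O(1/(n-|A|)) = o(1)$) still had Lubell value exceeding $e_i + \varepsilon$, then lower-L-boundedness of $P_i$ is contradicted for large $n$ — so we may assume it does not, but then summing up the actual averages gives a contradiction with $\ave_{\C \in \sC_A}|\F\cap\C| > e_1 + \delta = e_1 + k\varepsilon$. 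Concretely: either we successfully build disjoint $\G_1,\dots,\G_k$ with $\G_i \cup \{A\}$ containing $P_i$ (identifying all the $\hat{0}$'s at $A$), which yields a copy of $P = \V(P_1,\dots,P_k)$ inside $\F$, contradicting $P$-freeness; or at some stage $i$ the residual family violates what lower-L-boundedness of $P_i$ permits, again a contradiction.

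The main obstacle I anticipate is bookkeeping the size condition through the passage to $[A,[n]]$ and verifying that the greedy extraction really goes through $k$ steps with the right quantitative slack — specifically, that each removed subfamily $\G_i$ has size bounded by a constant (depending only on the $P_i$), so that $\sum_{j<i}|\G_j|/(n-|A|) \to 0$, leaving the residual average still above $e_i + \varepsilon$ for $n$ large; and that "size less than $\beta n$ in $[n]$" translates to "size less than $\beta' (n-|A|)$ in $\B_{n-|A|}$" for some fixed $\beta' \in (\frac12,1)$ so that the hypothesis of lower-L-boundedness of each $P_i$ is genuinely applicable. One must also handle the case $\hb(\F) \le e_1$ trivially (then there is nothing to prove for that $n$) and note that since only infinitely many bad $n$ are assumed, a single contradiction at one large $n$ suffices. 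Because the theorem as stated imposes no ordering, the first move is simply to relabel so that $e_1 = \max_i e_i$; everything else is the block-partition-plus-greedy-peeling argument, which is the lower-L-bounded analogue of the proof of Theorem~\ref{thm:joinULB}.
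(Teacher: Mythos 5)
Your proposal is correct and follows essentially the same route as the paper's proof: a min partition of the full chains, a heavy block $\sC_A$, translation to $\B_{n-|A|}$ with the observation that the size bound $\beta n-|A|<\beta(n-|A|)$ preserves the hypothesis, and greedy extraction of copies of the $P_i$ re-rooted at $A$ using lower-L-boundedness of each $P_i$ plus the fact that deleting boundedly many nonempty sets changes the Lubell value by $o(1)$. The only cosmetic difference is that you run the peeling for general $k$ directly (with $\varepsilon=\delta/k$) while the paper reduces to $k=2$.
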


\begin{proof}
Again we only need to show the case $k=2$. Let $e_i=e(P_i)$.
We may assume $e_1\ge e_2$. By Lemma~\ref{lem:eofjoin}, $e(P)=e_1$.
Given $\beta\in(\frac{1}{2},1)$ and $\varepsilon>0$, for $i=1,2$, there exists
$N_i$ such that if $n\ge N_i$ then any $P_i$-free family $\F$ containing
subsets of sizes at most $\beta n$ will satisfy $\hb(\F)< e_i+\frac{\varepsilon}{2}$.
Let $N=\max\{\frac{N_1}{1-\beta},\frac{N_2}{1-\beta},\frac{2(|P_i|-1)}{\varepsilon(1-\beta)} \}$.
For $n\ge N$, consider a $P$-free family $\F$ of subsets of $[n]$ containing subsets of
sizes at most $\beta n$. Apply the min partition on $\sC_n$.
We claim $\ave_{\C\in\sC_A}|\F\cap\C|<e_1+\varepsilon$.

If $\F\cap [A,[n]]$ is $P_1$-free, then
$\ave_{\C\in\sC_A}|\F\cap\C|=\hb_{n'}(\F')$,
where $n'=n-|A|$ and $\emptyset\in\F':=\{F\setminus A\mid
F \in\F,A\subseteq F\}$. The new family $\F'$ is $P$-free.
Note that $n'\ge(1-\beta)n\ge N_1$, and $\F'$ contains subsets
of sizes at most $\beta n-|A|<\beta(n-|A|)=\beta n'$.
So, $\hb_{n'}(\F')<e_1+\frac{\varepsilon}{2}$.

Else, suppose $\F\cap [A,[n]]$ contains $P_1$.
One can find a subfamily $\G\cup \{A\}\subseteq \F\cap [A,[n]]$
containing $P_1$ such that $|\G|=|P_1|-1$.
Then $(\F\setminus \G)\cap[A,[n]]$ must be $P_2$-free, since
$\F\cap[A,[n]]$ cannot contain $P$.
By the same reasoning as above, we have
$\ave_{\C\in\sC_A}|(\F\setminus \G)\cap\C|<e_2+\frac{\varepsilon}{2}$.
The contribution of $\G$ to
$\ave_{\C\in\sC_A}|\F\cap\C|$ is at most
$\frac{|P_1|-1}{n'}<\frac{\varepsilon}{2}$.
Hence,
$\ave_{\C\in\sC_A}|\F\cap\C|<e_2+\frac{\varepsilon}{2}+\frac{\varepsilon}{2}
\le e_1+\varepsilon$.

Therefore, $\hb(F)<e_1+\varepsilon$, and $P$ is lower-L-bounded.
\end{proof}

\noindent{\bf Examples.} We have seen that the fan posets
$\V(2,2)$ and $\V(2,2,2)$ are lower-L-bounded.
The left poset in
Figure~\ref{fig:trees} is obtained by identifying the
$\hat{0}$ of $\V(2,2)$ with the maximal element of a large interval
of another $\V(2,2)$. The middle poset
is similar, but each $\V(2,2)$ is replaced by $\V(2,2,2)$.
The right poset is obtained by wedging the
other two posets. Each poset is lower-L-bounded by the
theorems above.

\begin{figure}[h]
\begin{center}
\begin{picture}(300,60)
\put(50,0){\circle*{4}}
\put(50,0){\line(1,1){20}}
\put(50,0){\line(-1,1){20}}
\put(30,20){\circle*{4}}
\put(70,20){\circle*{4}}
\put(70,20){\line(1,1){20}}
\put(70,20){\line(-1,1){20}}
\put(90,40){\circle*{4}}
\put(50,40){\circle*{4}}
\put(150,0){\circle*{4}}
\put(150,0){\line(0,1){20}}
\put(150,0){\line(-5,4){25}}
\put(150,0){\line(5,4){25}}
\put(125,20){\circle*{4}}
\put(175,20){\circle*{4}}
\put(150,20){\circle*{4}}
\put(150,20){\line(0,1){20}}
\put(150,20){\line(-5,4){25}}
\put(150,20){\line(5,4){25}}
\put(150,40){\circle*{4}}
\put(125,40){\circle*{4}}
\put(175,40){\circle*{4}}
\put(240,20){\circle*{4}}
\put(240,40){\circle*{4}}
\put(220,20){\circle*{4}}
\put(220,40){\circle*{4}}
\put(250,0){\line(1,2){10}}
\put(250,0){\line(-1,2){10}}
\put(250,0){\line(-3,2){30}}
\put(240,20){\line(0,1){20}}
\put(240,20){\line(-1,1){20}}
\put(250,0){\circle*{4}}
\put(250,0){\line(1,2){10}}
\put(250,0){\line(1,1){20}}
\put(250,0){\line(3,2){30}}
\put(260,20){\circle*{4}}
\put(270,20){\circle*{4}}
\put(280,20){\circle*{4}}
\put(260,20){\line(0,1){20}}
\put(260,20){\line(1,2){10}}
\put(260,20){\line(1,1){20}}
\put(260,40){\circle*{4}}
\put(270,40){\circle*{4}}
\put(280,40){\circle*{4}}
\end{picture}
\end{center}
\caption{Three lower-L-bounded trees.}\label{fig:trees}
\end{figure}

Each tree in the figure is obtained by repeatedly applying
Theorems~\ref{thm:LLBchain} and~\ref{thm:joinLLB} to poset
$\Pa_2$. We can build more elaborate examples by expanding
 each $\Pa_2$ in the tree posets by L-bounded posets. For
instance, the poset in Figure~\ref{fig:BLUP} is
the wedge $\V(P_1,P_2,P_3)$,
where $P_1=\Pa_2$ , $P_2=\D_3\oplus_I \V(\Ha(4,3),\Pa_2)$, and
$P_3=\Pa_2\oplus_I\V(2,2,2)$. This poset is lower-L-bounded.

\begin{figure}[h]
\begin{center}
\begin{picture}(110,60)
\put(50,0){\circle*{4}}
\put(10,20){\circle*{4}}
\put(40,10){\circle*{4}}
\put(50,10){\circle*{4}}
\put(50,20){\circle*{4}}
\put(60,10){\circle*{4}}
\put(90,20){\circle*{4}}
\put(50,30){\circle*{4}}
\put(50,40){\circle*{4}}
\put(30,40){\circle*{4}}
\put(40,50){\circle*{4}}
\put(60,40){\circle*{4}}
\put(90,40){\circle*{4}}
\put(80,40){\circle*{4}}
\put(100,40){\circle*{4}}
\put(50,0){\line(0,1){20}}
\put(50,0){\line(2,1){40}}
\put(50,0){\line(1,1){10}}
\put(50,0){\line(-1,1){10}}
\put(50,0){\line(-2,1){40}}
\put(50,20){\line(-1,-1){10}}
\put(50,20){\line(1,-1){10}}
\put(50,20){\line(1,2){10}}
\put(50,20){\line(0,1){20}}
\put(50,20){\line(-1,1){20}}
\put(90,20){\line(0,1){20}}
\put(90,20){\line(1,2){10}}
\put(90,20){\line(-1,2){10}}
\put(40,50){\line(-1,-1){10}}
\put(40,50){\line(1,-1){10}}
\end{picture}
\end{center}
\caption{The poset $\V(\Pa_2,\D_3\oplus_I \V(\Ha(4,3),\Pa_2),
\Pa_2\oplus_I\V(2,2,2)$).}\label{fig:BLUP}
\end{figure}

The posets in the figures have a tree-structure rooted at the
bottom.  We may combine such a poset with its dual, joining them
at the roots.   The {\em baton} poset~\cite{GriLu}
$\Pa_k(s,t)=d(\V_s)\oplus_I \Pa_k\oplus_I \V_t$ is special case of
this result.

\begin{theorem}\label{thm:LLBdLLB}
Let $P_1$ and $P_2$ be lower-L-bounded posets with $\hat{0}$.  Let
$d(P_2)$ be the dual of $P_2$. Then poset obtained by identifying
the $\hat{0}$ of $P_1$ to the $\hat{1}$ of $d(P_2)$ satisfies
$\pi=e$.
\end{theorem}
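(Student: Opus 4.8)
The plan is to split any $P$-free family $\F$ around a "middle" level and reduce to the two lower-L-boundedness hypotheses, one applied directly and one applied to its dual. Write $e_1 = e(P_1)$, $e_2 = e(P_2) = e(d(P_2))$. The glued poset $P$ is $d(P_2) \oplus_I P_1$ in the notation of Section~\ref{sec:CONS}: the large interval of $d(P_2)$ (which exists and is unique since $P_2$, hence $d(P_2)$, is lower-L-bounded — though here we really only need a large interval at the join, and the $\hat{1}$ of $d(P_2)$ serves as the maximal element of such an interval, c.f.\ Lemma~\ref{lem:parameters}(iv)) is identified with the $\hat 0$ of $P_1$. By Lemma~\ref{lem:parameters}(iv), $e(P) = e_1 + e_2$. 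I will prove the stronger-looking but equivalent statement that $\La(n,P)/\nchn \to e_1+e_2$, which by \eqref{eqn:epilambda} gives $\pi(P)=e(P)$.

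The key step is a two-sided truncation. Fix $\varepsilon>0$ and pick $\beta\in(\tfrac12,1)$ close to $1$; by the tail estimate $\sum_{i=0}^{\alpha n}\binom{n}{i}=O(n^{-2})\nchn$ quoted in the proof of Proposition~\ref{thm:LLB}, the levels of size outside $[(1-\beta)n,\beta n]$ contribute $O(n^{-2})\nchn$ to $|\F|$ and may be discarded. For the remaining family, apply the min partition on the full chains $\sC_n$ to obtain blocks $\sC_A$ indexed by $A=\min(\F\cap\C)$; one block holds the chains avoiding $\F$. As in the proof of Lemma~\ref{lem:parameters}, within a block $\sC_A$ I split the restricted family $\F\cap[A,[n]]$ into $\F_1^A := \{S : \F\cap[S,[n]]$ contains $P_1\}$ and its complement $\F_2^A$, so that $\F_1^A$ is $P_1$-free and $\F_2^A$ is $(d(P_2))$-free (here we use that $P_1$ has a $\hat 0$ and $d(P_2)$ has a $\hat 1$, exactly the hypotheses given). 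Passing to $\B_{n-|A|}$, the $P_1$-free part contributes at most $e_1+\varepsilon$ to $\ave_{\C\in\sC_A}|\F\cap\C|$ by lower-L-boundedness of $P_1$, provided $n-|A| > (1-\beta)n$ is large, which holds since all sizes are $<\beta n$. For the $(d(P_2))$-free part $\F_2^A$, I dualize: $\F_2^A$ corresponds to a $P_2$-free family in the dual Boolean lattice on $[n]\setminus(\text{complement})$, whose sets have sizes bounded away from the top, so that after the dual relabelling all its sets have size less than $\beta' n'$ for some $\beta'\in(\tfrac12,1)$; lower-L-boundedness of $P_2$ then bounds its contribution by $e_2+\varepsilon$. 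Summing, $\ave_{\C\in\sC_A}|\F\cap\C|\le e_1+e_2+2\varepsilon$ for every block, hence $\hb(\F)\le e_1+e_2+2\varepsilon$, and Lemma~\ref{lub} gives $|\F|\le (e_1+e_2+2\varepsilon)\nchn + O(n^{-2})\nchn$. Since $\varepsilon$ was arbitrary and $e(P)\le\pi(P)$ always, $\pi(P)=e(P)=e_1+e_2$.

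The main obstacle I anticipate is the bookkeeping in the dual half of the block argument: showing that the $(d(P_2))$-free family $\F_2^A$, once transported into the dual of $\B_{n-|A|}$, genuinely satisfies a size condition of the form "all sets of size less than $\beta' n'$" with $\beta'<1$ fixed, so that the lower-L-boundedness of $P_2$ applies with a uniform threshold. This requires tracking how the original bound "all sizes in $[(1-\beta)n,\beta n]$" behaves under restriction to $[A,[n]]$ and then under complementation; the worst case is when $|A|$ is itself close to $(1-\beta)n$, and one checks that the dual sizes still sit below $\beta' n'$ for, say, $\beta' = \beta$. A secondary subtlety is that $P$ here is the gluing $d(P_2)\oplus_I P_1$ rather than a wedge, so I cannot invoke Theorem~\ref{thm:joinLLB} directly; instead the argument mirrors Theorem~\ref{thm:LLBchain}, with the twist that only \emph{one} of the two factors ($P_1$) is used in its given orientation while the other is used in dual form — this is precisely why the hypotheses ask for $\hat 0$'s and the conclusion speaks of $d(P_2)$. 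Everything else is the routine truncation-plus-min-partition machinery already deployed in Propositions~\ref{thm:LLB} and Theorem~\ref{thm:LLBchain}.
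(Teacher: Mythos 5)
Your overall plan (two-sided truncation, split at the glue point into a $P_1$-free part and a $d(P_2)$-free part, handle the $d(P_2)$-free part by complementation) can be made to work, but the step you yourself flagged as the main obstacle is exactly where your argument breaks, and your proposed resolution of it is false. After the min partition, a block $\sC_A$ is analyzed by deleting the elements of $A$ from each member of $\F\cap[A,[n]]$, giving a family in $\B_{n'}$ with $n'=n-|A|$; complementing inside $\B_{n'}$, a set $F$ acquires size $n'-(|F|-|A|)=n-|F|$. All you know is $|F|\ge(1-\beta)n$ and $|F|\ge|A|$, so this dual size can be as large as $n'$ itself: any member of the block whose size is close to $|A|$ (say $|F|=|A|+1$, dual size $n'-1$) lands at the very top of $\B_{n'}$, no matter how $\beta'$ is chosen, and since $|A|$ can be as large as $\beta n$ there is no slack to absorb this. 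So the $(d(P_2))$-free part of a block does \emph{not} satisfy the size hypothesis needed to invoke lower-L-boundedness of $P_2$, and the claim that ``the dual sizes still sit below $\beta'n'$ for, say, $\beta'=\beta$'' is simply not true. (Two smaller slips: in the splitting of Lemma~\ref{lem:parameters}, the family of those $S$ for which $\F\cap[S,[n]]$ contains $P_1$ is the $d(P_2)$-free part, not the $P_1$-free part; and uniqueness of large intervals (Proposition~\ref{lem:unilarge}) is proved only for L-bounded posets --- a lower-L-bounded poset such as $\V(2,2)$ has several --- though this is harmless here because the clause ``$P_1=\{p\}^-$'' of Lemma~\ref{lem:parameters}(iv) applies directly.)

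The gap disappears if you drop the min partition entirely: split the truncated family $\G$ globally, as in the proof of Lemma~\ref{lem:parameters}, into a $d(P_2)$-free part $\G_1$ and a $P_1$-free part $\G_2$, so $\hb(\G)\le\hb(\G_1)+\hb(\G_2)$; all sets of $\G_2$ have size at most $\beta n$, so lower-L-boundedness of $P_1$ gives $\hb(\G_2)\le e_1+\varepsilon$; and complementing $\G_1$ inside $\B_n$ itself (the Lubell function is invariant, and the complements have size at most $\beta n$ because the originals have size at least $(1-\beta)n$) produces a $P_2$-free family to which lower-L-boundedness of $P_2$ applies, giving $\hb(\G_1)\le e_2+\varepsilon$. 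The paper avoids even this and argues purely at the level of the parameters: Lemma~\ref{lem:parameters}(ii)--(iii) give $\La(n,P)\le\La(n,d(P_2))+\La(n,P_1)$, complementation gives $\La(n,d(P_2))=\La(n,P_2)$, Proposition~\ref{thm:LLB} gives $\pi(P_i)=e(P_i)$, and Lemma~\ref{lem:parameters}(iv) gives $e(P)=e(P_1)+e(d(P_2))$, so $\pi(P)=e(P)$ follows by squeezing, with no truncation or chain-partition analysis at all.
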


\begin{proof}
Since a family $\F$ of subsets of $[n]$ is $P_2$-free if and only if
the family $\F'=\{F\mid([n]\setminus F)\in\F\}$ is $d(P_2)$-free,
 and since $P_2$ is lower-L-bounded, we have
\[
\pi(d(P_2))=\lim_{n\rightarrow\infty}\frac{\La(n,d(P_2))}{\nchn}=
\lim_{n\rightarrow\infty}\frac{\La(n,P_2)}{\nchn}=e(P_2)=e(d(P_2)).
\]
By Lemma~\ref{lem:parameters}, we have
\[
e(P_1)+e(d(P_2))=e(P)\le \pi(P)
\le \pi(P_1)+\pi(d(P_2))=e(P_1)+e(d(P_2)).
\]
Hence $\pi(P)=e(P)$.
\end{proof}


\section{Concluding Remarks}\label{sec:REM}

While our methods verify Conjecture~\ref{conj:GriLu} for many new
posets, it is still far from proven.   Beyond the conjecture, there are
 problems on Lubell bounded posets that are  interesting in
their own right.

\begin{question}
Is it true for every poset $P$ that the
Lubell function limit  $\lambda(P)$ exists?
\end{question}

We believe it does exist.

Recall that uniformly L-bounded posets $P$ have the property
\[
e(P)=\pi(P)=\lambda(P).
\]
We suspect that
only uniformly L-bounded posets satisfy this.
For the (larger) class of centrally L-bounded posets $P$,
even if $\lambda(P)$ exists, it may be larger than $\pi(P)$.
A good example is the butterfly
poset $\B$. We have seen a $\B$-free family $\F$ with $\hb(\F)=3$.
On the other hand, the butterfly $\B$ is a subposet
of $\Pa_4$. Thus, $|\F\cap\C|\le 3$ for any $\B$-free family $\F$.
We conclude that $\lambda_n(\B)=3$ for all $n\ge 2$, and hence,
$\lambda(\B)=3$, while $\pi(\B)=2$.

Another centrally L-bounded example is the fan $\J=\V(3,2)$ and the family
\[
\F=\{[n]\}\cup\{[n]\setminus\{i\}\mid i\mbox{ is odd.}\}
\cup\{[n]\setminus\{i,j\}\mid \mbox {At least one of }i,j\mbox{ is
odd.}\}
\]
Then $\F$ is $\V(3,2)$-free, since every set in $\F$ has at most
two supersets.
For all $n$ we have $\lambda_n(\V(3,2))\ge\hb(\F)\ge
\frac{9}{4}>\pi(\V(3,2))=2$.

Posets $P$ with more than one large interval also fail to satisfy
$e(P)=\pi(P)=\lambda(P)$:
If $P$ contains two large intervals, they cannot share
both their maximal and minimal elements, and then either
$\bigcup_{i=0}^{e}\binom{[n]}{i}$ or
$\bigcup_{i=0}^{e}\binom{[n]}{n-i}$ is $P$-free.
It means that $\lambda(P)>e(P)$, if $\lambda(P)$ exists.

Our general question is this:

\begin{question}
Do there exist posets $P$ that are not uniformly L-bounded, but
satisfy $\lambda(P)=e(P)$?
\end{question}

The idea of $m$-L-boundedness is to bound the Lubell function for
$P$-free families $\F$ of subsets, where the sizes of the sets in $\F$
are restricted to be in the range $[m,n-m]$, that is, we toss away
the comparatively few subsets that are very small or very large.
In fact, we could tighten this size restriction even further, while
still giving up on comparatively few subsets.

We generalize the Lubell measure $\lambda_n(P)$ in the following way:
For a function $f:\mathbb{N}\rightarrow\mathbb{N}$,
we consider all $P$-free families $\F$ of sets, all with sizes
in the range $[f(n),n-f(n)]$, and define $\lambda_{n}^{(f(n))}(P)$
to be the maximum value of $\hb_n(\F)$ over all these families.
To prove the Griggs-Lu Conjecture~\ref{conj:GriLu},
it suffices to find for each poset $P$ a function
$f$ that satisfies both
\begin{center}
(1) $\limsup_{n\rightarrow\infty}\lambda_{n}^{(f(n))}(P)\le e(P)$
and (2) $\sum_{i=0}^{f(n)}\binom{n}{i}=o(2^n)$.
\end{center}
In this paper, our goal is achieved for L-bounded posets $P$, for which
the constant function $f(n)\equiv m$ works, for constant $m$ depending on $P$.

Let $f(n)=\lfloor \alpha n\rfloor$ for some constant $\alpha\in(0,\frac{1}{2})$.
Then $f$ satisfies condition (2) by Shannon's Theorem~\cite[page 256]{AloSpe}.
For lower-L-bounded and upper-L-bounded posets $P$,
$f(n)=\lfloor \alpha n\rfloor$ also satisfies
condition (1), hence $\pi(P)=e(P)$.
Incidentally,
it would be interesting if one could find examples of posets $P$ with
$\limsup_{n\rightarrow\infty}\lambda_{n}^{(\lfloor\alpha n\rfloor)}(P)> e(P)$.
Note that this would not disprove Conjecture~\ref{conj:GriLu}.

Remarkably, there are posets $P$, such as $\V_2$, with
$\limsup_{n\rightarrow\infty}\lambda_{n}^{(f(n))}(P)> e(P)$,
for any constant function $f$.
In Section~\ref{sec:LBP},  we constructed $\V_2$-free families $\F_n$ showing
$\lambda_n^{(m)}(\V_2)\ge\hb_n(\overline{\F}_n)>1$.
In fact,
$\limsup_{n\rightarrow\infty}\lambda_n^{(m)}(\V_2)\ge
\limsup_{n\rightarrow\infty}\hb_n(\overline{\F}_n)= 1+(\frac{1}{2})^m$.
On the other hand, one can use the min partition method to show that $\lambda_n^{(m)}(\V_2)\le 1+\frac{1}{m+1}$.
So $\lim_{m\rightarrow\infty}(\limsup_{n\rightarrow\infty}\lambda_{n}^{(m)}(\V_2))=1$, which is $e(\V_2)$.

More generally, for any poset $P$, $\lambda_{n}^{(m)}(P)\ge\lambda_{n}^{(m+1)}(P)$ holds for all $m$,
since in the latter term we consider fewer $P$-free families.
Therefore,
$\lim_{m\rightarrow\infty}(\limsup_{n\rightarrow\infty}\lambda_{n}^{(m)}(P))$ exists.
We are interested in this limit.

\begin{question}
Does there exist a poset $P$ such that
\[\lim_{m\rightarrow\infty}(\limsup_{n\rightarrow\infty}\lambda_{n}^{(m)}(P))>e(P)?\]
\end{question}

For L-bounded posets, our strong suspicion is that for large enough $n$,
the largest $P$-free families of subsets of $[n]$ cluster near the
middle ranks, where most subsets are located.  Specifically, we ask

\begin{question}
For any $m$-L-bounded poset $P$, does there exist $N=N(m,e,P)$
such that for all $n\ge N$, $\lanp=\Sigma(n,e(P))$?
If true, does there exist such $N$, so that for all $n\ge N$,
a largest $P$-free family of subsets of $[n]$ must be $\B(n,e(P))$?
\end{question}

For $m=0$, both parts hold for all $n$.
For $m=1$, we proved in Theorem~\ref{thm:CLB}
that if $P$ is centrally L-bounded,
the first answer is yes with $N=e(P)+3$.
The second answer is open for $m=1$, though it is yes for the
particular example of the butterfly poset.
For general $m$, when $P$ is
$m$-L-bounded, we cannot yet establish this value of $\lanp$.
Proposition~\ref{thm:mLB}  gives the weaker general bound $\lanp\le
\Sigma(n,e(P))+2\sum_{i=0}^{m-1}\binom{n}{i}$.

\end{document}